\journal{Computers and Mathematics with Applications}
\newcommand{\change}[2]{#2} 
\def\c {{\boldsymbol c}}
\def\u {{\boldsymbol u}}
\def\ln{\textup{ln}}
\def\tr{\textup{tr}}
\def\one {\mathbbm{1}}
\def\R   {\mathbbm{R}}
\def\N   {\mathbbm{N}}
\def\sig {{\boldsymbol \sigma}}
\def \hueco{\noalign{\medskip}}
\def \beq{\begin{equation}}
\def \eeq{\end{equation}}
\def \ba{\begin{array}}
\def \ea{\end{array}}
\def \bal{\begin{aligned}}
\def \eal{\end{aligned}}
\def \bm{\begin{multline}}
\def \em{\end{multline}}
\def \dis{\displaystyle}
\def\softd{{\leavevmode\setbox1=\hbox{d}%
          \hbox to 1.05\wd1{d\kern-0.4ex{\char039}\hss}}}
\newtheorem{theorem}{Theorem}[section]
\newtheorem{lemma}{Lemma}[section]
\newtheorem{remark}{Remark}[section]
\newtheorem{definition}{Definition}[section]
\begin{document}

\begin{frontmatter}
 
\title{Energy-stable linear schemes for polymer-solvent phase field models}

\author[mymainaddress]{Paul J. Strasser\corref{mycorrespondingauthor}}
\cortext[mycorrespondingauthor]{Corresponding author}
\ead{strasser@uni-mainz.de}

\author[mysecondaryaddress]{Giordano Tierra}

\author[mytarziaryaddress]{Burkhard D\"unweg}

\author[mymainaddress]{M\'aria Luk\'a\v{c}ov\'a-Medvi\softd ov\'a}

\address[mymainaddress]{Institute of Mathematics, Johannes Gutenberg University Mainz, Staudingerweg 9, 55128 Mainz, Germany}

\address[mysecondaryaddress]{Department of Mathematics, Temple University, 1805 N. Broad Street, Philadelphia PA 19122, USA}

\address[mytarziaryaddress]{Max Planck Institute for Polymer Research, Ackermannweg 10, 55128 Mainz, Germany}

\begin{abstract}
We present new linear energy-stable numerical
  schemes for numerical simulation of complex polymer-solvent
  mixtures. The mathematical model proposed by Zhou, Zhang and E~{\sl
    (Physical Review E 73, 2006)} consists of the Cahn-Hilliard
  equation which describes dynamics of the interface that separates
  polymer and solvent and the Oldroyd-B equations for the
  hydrodynamics of polymeric mixtures. The model is thermodynamically
  consistent and dissipates free energy. Our main goal in this paper
  is to derive numerical schemes for the polymer-solvent mixture model
  that are energy dissipative and efficient in time. To this end we
  will propose several problem-suited time discretizations yielding
  linear schemes and discuss their properties.
\end{abstract}

\begin{keyword}
Two-phase flows \sep Non-Newtonian \sep Navier-Stokes \sep Cahn-Hilliard \sep 
	  Oldroyd-B \sep Flory-Huggins \sep Free energy dissipation \sep Linear schemes
\end{keyword}

\end{frontmatter}

\linenumbers

\section{Introduction}
\label{sec:intro}

Phase separation in binary fluids is a fundamental process in
condensed-matter physics. For Newtonian fluids the phenomenon of
spinodal decomposition is reasonably well understood in terms of the
so-called ``model H''~\cite{hohenberg,bray,onuki}, where the
hydrodynamic equations of motion for mass and momentum conservation
are coupled to a convection-diffusion equation for the concentration
(or in general the ``phase field'' variable $\phi$), and the
thermodynamics, which is described by a (free) energy functional
$E(\phi)$, gives rise to a driving force, see, e.~g.,
\cite{abels,GT,GT1,zhou}.  In such ``diffuse interface'' or ``phase
field'' models, the interface between two phases is a thin layer of
finite thickness, across which $\phi$ varies continuously.

A big advantage of such models is that interfaces are defined
implicitly and do not need to be tracked. Similarly, topological
changes of the interface structure are automatically described
correctly. The physics (and therefore also the mathematics and
numerics) becomes more involved if one component --- or both --- is a
macromolecular compound. In this case, the large molecular relaxation
time gives rise to a dynamic coupling between intra-molecular
processes and the unmixing on experimentally relevant time scales,
with interesting new phenomena, for which the term ``viscoelastic
phase separation''~\cite{tanaka} has been coined. Here the
construction of physically sound dynamic equations with suitable
constitutive relations to describe the viscoelasticity is already a
challenge in itself. \textit{Tanaka}~\cite{tanaka} made the first
attempt in this direction; however, \textit{Zhou et al.}~\cite{zhou}
showed later that this dynamics violates the second law of
thermodynamics and provided a corrected set of equations that satisfy
it. We thus study the diffuse-interface viscoelastic equations put
forward in~\cite{zhou} for the case of the unmixing process of a
polymer-solvent system.

Typically the interfacial region separating the two fluids is very
narrow, and a high spatial resolution is required to accurately
capture the interface dynamics. In fact, the underlying problem is
stiff, which necessitates an implicit time discretization. Moreover,
the solution admits several time scales over which it evolves,
cf.~\cite{KW}. In the literature one can find already several
numerical methods that have been used for the numerical approximation
of diffuse interface models, see, e.~g.,~\cite{CKQT,GT1,KW,lee,GT} and
the references therein.

In order to describe the dynamics of a complex polymer-solvent
mixture, the Cahn-Hilliard equations for the phase field evolution are
coupled with the Oldroyd-B equations, which consist of the momentum
equation for the velocity field, the continuity equation, and the
rheological equation for time evolution of the elastic stress
tensor. We note in passing that there is quite a large number of
analytical as well as numerical results available in the literature
for the Oldroyd-B system, see, e.~g.,~\cite{BB,FK,FGO,bangwei}. The
main challenge in this field is to obtain a stable approximate
numerical solution for large Weissenberg numbers. The dimensionless
Weissenberg number represents elastic effects; it is large when the
molecular relaxation time is comparable to the time scale of the flow,
or even exceeds it significantly. In the present work we consider the
non-critical regime of Weissenberg numbers. Applying the techniques
from~\cite{FK,bangwei}, a further generalization using the
log-transformation of the elastic stress tensor and the Lagrange-type
approximation of the convective term is possible.

The purpose of the present paper is to derive energy-stable and
runtime-efficient numerical schemes to solve the above-mentioned
equations. This task has already been tackled by us in a preliminary
fashion before~\cite{blurbfeb2017}, from which paper we have also
taken most of the wording of the present introduction. Compared to
Reference~\cite{blurbfeb2017}, we have significantly improved the
results, and also provide a much broader context and far more details.

The paper is organized in the following way. In Section~\ref{sec:math}
we present a mathematical model for the polymer-solvent mixture
consisting of the Cahn-Hilliard equation for the interface dynamics
and the Oldroyd-B equations for the hydrodynamics. We also introduce a
simplified model modelling only interface dynamics of the
polymer-solvent mixture without any hydrodynamic
effects. Section~\ref{sec:num} is devoted to problem-suited numerical
methods for both models. We present first and second order schemes
that are linear and energy dissipative.  
\change{Starting with the simplified model 
and continuing with the full model for polymer-solvent mixture.}
{We start with numerical methods for the simplified model and continue with corresponding methods for the full model for polymer-solvent mixture.}
For the latter we propose two types of linear, free energy
dissipative schemes, fully coupled schemes in
Subsection~\ref{sec:coupled} and the splitting scheme in
Subsection~\ref{sec:splitting}. Numerical experiments presented in
Section~\ref{sec:simulations} confirm the schemes robustness and
reliability to simulate viscoelastic phase separation.

\section{Mathematical models}\label{sec:math}

A classical approach to model interface problems is the diffuse
interface theory that describes the dynamics of the interfaces by
layers of small thickness whose structure is determined by a balance
of molecular forces. Here the tendencies for mixing and de-mixing are
in competition through a non-local mixing energy. Diffuse interface
models are able to treat topological changes of the interface in a
natural way. The surface motion is governed by the Cahn-Hilliard
equation, see \cite{ch}, that can be derived as the gradient flow of 
a phase-field free energy functional 
\beq E_{mix}(\phi) = \int_\Omega
\Big(\frac{C_0}2|\nabla\phi|^2 + F(\phi)\Big) \,.
\label{freeEnergy}
\eeq 
Here $\phi$ denotes the phase-field variable that is used to
express the two phases of the system. The phase-field function varies
smoothly over the interfacial regions. Further, $\Omega$ is a
computational domain with Lipschitz continuous boundary, $C_0$ is a
positive constant controlling the interface width and $F(\phi)$
denotes a double-well potential that represents the tendency of a
system to have two different stable phases.

A simple potential that satisfies these conditions is the
Ginzburg-Landau potential
\beq
 \label{GL_pot}
 F_{pol}(\phi) = \frac14(\phi^2-1)^2 \, ,
\eeq
which is defined on the whole real axis, and whose minima occur at
$\phi = \pm 1$. This potential is quite often studied in the
mathematical literature, see, e.~g., \textit{Elliot and
  Zheng}~\cite{elliot} or \textit{Elliot and
  Garcke}~\cite{eg1,eg2}. From a physical point of view, the
Flory-Huggins potential~\cite{pflory,huggins}
\beq\label{eq:fh} 
F_{log}(\phi) = \frac{1}{n_p}\phi \ln\phi +
\frac{1}{n_s}(1-\phi)\ln(1-\phi) + \chi \phi (1-\phi) \, ,
\eeq
which is defined on the interval $(0,1)$ and has two minima within,
describes polymer-solvent phase separation more
accurately, as it has been derived as a Mean Field theory for polymer
systems. In \change{Equation~}{}\eqref{eq:fh}, $n_p$ and $n_s$ denote the degrees
of polymerization of the two components, while $\chi > 0$ is the
(temperature-dependent) Flory-Huggins interaction parameter and we
measure the free energy in units of the thermal energy $k_B T$, where
$k_B$ is the Boltzmann constant and $T$ the absolute temperature.

For purposes of the proofs to be presented below, we mostly consider
the Ginzburg-Landau potential as defined in \change{Equation~}{}\eqref{GL_pot},
however with the modification proposed in, e.~g., \cite{convex, nematic},
where the steep increase 
$\sim \phi^4$ outside $[-1,1]$ is replaced by a weaker quadratic rise,
\begin{equation}\label{eq:GL_mod}
 \tilde{F}_{pol}=
 \begin{cases} 
 (\phi+1)^2 \quad \quad \phi < -1\,, \\
 \frac14(\phi^2-1)^2 \quad \phi \in [-1,1]\,, \\
 (\phi-1)^2 \quad \quad \phi > 1 \,.
 \end{cases}
\end{equation}
This modified potential is defined on the whole real axis and has a bounded 
second derivative. These properties facilitate to establish some bounds
needed in the proofs. Thus the Ginzburg-Landau potential allows the
derivation of schemes that are energy-stable even though they are
linear.  It is also possible to derive schemes that are more generally
applicable; one of these latter schemes is applied in our numerical
experiments, in which we use the Flory-Huggins potential in order to
facilitate comparisons with computer simulations of a quasi-atomistic
model~\cite{blurbfeb2017}.

Now, the Cahn-Hilliard equation can be derived from the
mass balance law
$$
  \frac{\partial \phi}{\partial t} = - \nabla \cdot J\,,
$$
where the mass flux $J$ is defined as
$$
  J = - m(\phi) \nabla \mu\,.
$$
Here
$$
  m(\phi) = M\left(\phi (1-\phi)\right)^n
$$
denotes the mobility function with $M$ a positive constant, $n\in\N_0$
and $\mu$ denotes the chemical potential such that
$$
\mu:=\frac{\delta E_{mix}}{\delta \phi} = -C_0 \Delta \phi + f(\phi)\,.
$$
Here $\frac{\delta E_{mix}}{\delta \phi}$ is the variational derivative of the mixing energy and $f(\phi) = F'(\phi)$. \\
Gathering this equations yields the Cahn-Hilliard equation
\beq \label{CH_eq}
\frac{\partial \phi}{\partial t} = \nabla \cdot \Big\{ M\left(\phi (1-\phi)\right)^n \, \nabla \Big[-C_0\Delta\phi +f(\phi)\Big] \Big\}\,.
\eeq	
Dynamics of Newtonian two-phase mixtures is usually described as the
gradient flow of the free energy consisting of the mixing energy
$E_{mix}$ and the kinetic energy $E_{kin}$. This leads to \change{a}{the} coupled
Cahn-Hilliard-Navier-Stokes system. In order to include the influence
of polymers in such a system we extend it to a viscoelastic phase
field model.  This has been done at first by
\textit{Tanaka}~\cite{tanaka} by adding viscoelastic energy due to the
bulk and shear stress, here the separation of the total stress tensor
into a bulk and a shear part was motivated by \textit{Tanaka and
  Araki}~\cite{Araki}. This model violates the second law of
thermodynamics, i.~e. it is not free energy dissipative. In the recent
paper \cite{zhou} \textit{Zhou, Zhang and E} propose an improved model
for the viscoelastic phase separation that is thermodynamically
consistent.

The total free energy $E$ is given as 
\begin{multline}
E_{tot}(\phi,q,\sig,\u)
= E_{mix}(\phi) + E_{kin}(\u) + E_{conf}(q) + E_{el}(\sig)
\\ \hueco
= \int_\Omega \Big(\frac{C_0}2|\nabla\phi|^2 + F(\phi)\Big) + \int_\Omega \frac12|\u|^2  + \int_\Omega
\frac12|q|^2 + \int_\Omega \frac12 \tr(\sig)
\,,
\end{multline}
where $\u$ is the averaged velocity field of the two components, 
$q$ the scalar bulk stress with $E_{conf}$ the corresponding chain 
conformational entropy of the polymer molecules and $\sig$ the shear 
stress tensor with $E_{el}$ the corresponding elastic energy of the polymer molecules.
Recalling that the chemical potential  $\mu = -C_0 \Delta \phi + f(\phi)$ 
and that we work with the Flory-Huggins potential \eqref{eq:fh}, 
i.e. $\phi \in (0,1)$, we obtain by the variational principle of the 
free energy minimization following the standard procedures of 
nonequilibrium thermodynamics, see \cite{zhou},
\beq
\bal\label{eq:model}
\frac{\partial\phi}{\partial t} + \u\cdot\nabla\phi &=
\nabla\cdot\left\{\phi(1-\phi)\,M\Big[\phi(1-\phi)\nabla\mu
- \nabla (A_1(\phi)\,q)\Big]\right\}\,,
\\ \hueco
\frac{\partial q}{\partial t} + \u\cdot\nabla q &=
- \frac1{\tau_b(\phi)} q - A_1(\phi)\,\nabla \cdot
  \left\{M\Big[\phi(1-\phi)\nabla\mu
- \nabla (A_1(\phi)\,q)\Big]\right\}\,,
\\ \hueco
\frac{\partial\sig}{\partial t} + (\u\cdot\nabla)\sig &=
(\nabla\u)\cdot\sig+\sig\cdot(\nabla\u)^T
- \frac1{\tau_s(\phi) }\sig
+ B_2(\phi)\Big[\nabla\u+(\nabla\u)^T\Big]\,,
\\ \hueco
\frac{\partial\u}{\partial t} + (\u\cdot\nabla)\u &=
- \nabla p + \nabla\cdot\left\{\eta(\phi)\Big[\nabla\u +
(\nabla\u)^T\Big]\right\} - \nabla\cdot(C_0\nabla\phi\otimes\nabla\phi)
+ \nabla\cdot\sig\,,
\\ \hueco
\nabla\cdot\u &= 0\,,
\eal
\eeq
where $\tau_b(\phi)=\tau_b^0\,\phi^2$ and $\tau_{s}(\phi)=\tau_{s}^0\,\phi^2$ are the relaxation times,
$B_2(\phi)=m^0_s\,\phi^2$ is the relaxation modulus, and $\tau_b^0, \tau_s^0$ and $m^0_s$ are positive constants.
$A_1(\phi)$ is the bulk modulus. The precise definition will be  given in Section~\ref{sec:simulations}.
Further,  $\eta(\phi)=1-\tau_s(\phi)B_2(\phi)$ is the viscosity
which is dependent on the relaxation and $p$ is the pressure.
For the aforementioned mobility the quartic function $m(\phi) = M\left(\phi(1-\phi)\right)^2$ is used. \\
\textit{Zhou et al.}~\cite{zhou} also considered the special case of model \eqref{eq:model} without hydrodynamic transport,
i.~e. $\u=0$. Note, that we use the same symbol $0$ for a scalar, a vector or a matrix. The resulting simplified model reads
\beq\label{eq:simp_model}
\ba{c}\dis
\frac{\partial\phi}{\partial t} =
\nabla\cdot\left\{\phi(1-\phi)M\Big[\phi(1-\phi)\nabla\mu - \nabla (A_1(\phi)\,
q)\Big]\right\}\,,
\\ \hueco\dis
\frac{\partial q}{\partial t}  = -\frac1{\tau_b(\phi)} q -
A_1(\phi)\nabla\cdot\left\{M\Big[\phi(1-\phi)\nabla\mu - \nabla (A_1(\phi)\,
q)\Big]\, \right. .
\ea
\eeq
In literature we can find already several well-established numerical methods for the Cahn-Hilliard equation (\ref{CH_eq}), 
see, e.~g.,~\cite{CKQT,GT1,KW,lee,GT}. 
\change{In the following Subsection we start by discussing the simplified model\eqref{eq:simp_model}.}
{In order to understand the crucial properties of the viscoelastic two-phase model \eqref{eq:model} we start by discussing its simplified version \eqref{eq:simp_model} in the following subsection.  
For the sake of simplicity we will call model \eqref{eq:model} the full model and its simplification \eqref{eq:simp_model} the simplified model.}

\subsection{\change{Simplified model without hydrodynamics}{Simplified model (without hydrodynamics)}}

In a special case when the hydrodynamics effects are neglected, i.~e. $\u = 0$, the total energy of the system consists of
the mixing energy and the chain conformational energy
\beq
E_{tot}(\phi,q)
=E_{mix}(\phi) + E_{conf}(q)
=\int_\Omega \Big(\frac{C_0}2|\nabla\phi|^2 + F(\phi)\Big) + \int_\Omega
\frac12|q|^2\,.
\eeq
The minimization principle yields the \change{}{simplified} model \eqref{eq:simp_model}.

\begin{theorem}\label{th:energylaw}
The problem \eqref{eq:simp_model} satisfies the following energy law
\beq\label{eq:energylaw}
\frac{dE_{tot}(\phi,q)}{dt} =
- \frac1{\tau^0_b} \left\|\frac{q}{\phi}\right\|^2_{L^2(\Omega)}
-\int_\Omega M\Big[\phi(1-\phi)\nabla\mu - \nabla (A_1(\phi)\, q)\Big]^2\,.
\eeq
\end{theorem}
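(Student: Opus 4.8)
The plan is to differentiate $E_{tot}(\phi,q) = E_{mix}(\phi) + E_{conf}(q)$ in time, substitute the evolution equations \eqref{eq:simp_model}, and integrate by parts, exploiting the chemical potential identity $\mu = -C_0\Delta\phi + f(\phi)$ to convert the $E_{mix}$ contribution into a $\mu$-weighted term. Concretely, I would first write
\begin{equation*}
\frac{dE_{mix}}{dt} = \int_\Omega \big(C_0\nabla\phi\cdot\nabla\phi_t + f(\phi)\phi_t\big) = \int_\Omega \mu\,\phi_t,
\end{equation*}
after one integration by parts in the gradient term (the boundary term vanishes under the natural no-flux/periodic boundary conditions tacitly assumed for the Cahn-Hilliard dynamics). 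Similarly $\frac{dE_{conf}}{dt} = \int_\Omega q\,q_t$.

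Next I would substitute the first equation of \eqref{eq:simp_model} for $\phi_t$ and the second for $q_t$. Writing $\mathbf{J} := M\big[\phi(1-\phi)\nabla\mu - \nabla(A_1(\phi)q)\big]$ for brevity, the $\phi$-equation is $\phi_t = \nabla\cdot\{\phi(1-\phi)\mathbf{J}\}$ and the $q$-equation is $q_t = -\tau_b(\phi)^{-1}q - A_1(\phi)\nabla\cdot\mathbf{J}$. Then
\begin{equation*}
\int_\Omega \mu\,\phi_t = \int_\Omega \mu\,\nabla\cdot\{\phi(1-\phi)\mathbf{J}\} = -\int_\Omega \phi(1-\phi)\,\nabla\mu\cdot\mathbf{J},
\end{equation*}
and
\begin{equation*}
\int_\Omega q\,q_t = -\int_\Omega \frac{q^2}{\tau_b(\phi)} - \int_\Omega q\,A_1(\phi)\,\nabla\cdot\mathbf{J} = -\int_\Omega \frac{q^2}{\tau_b(\phi)} + \int_\Omega \nabla(A_1(\phi)q)\cdot\mathbf{J},
\end{equation*}
again integrating by parts and discarding boundary terms. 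Adding these two, the cross terms combine to reconstruct $\mathbf{J}$ against itself: $-\int_\Omega \phi(1-\phi)\nabla\mu\cdot\mathbf{J} + \int_\Omega \nabla(A_1(\phi)q)\cdot\mathbf{J} = -\int_\Omega \big[\phi(1-\phi)\nabla\mu - \nabla(A_1(\phi)q)\big]\cdot\mathbf{J} = -\frac{1}{M}\int_\Omega |\mathbf{J}|^2 = -\int_\Omega M\big[\phi(1-\phi)\nabla\mu - \nabla(A_1(\phi)q)\big]^2$. Finally, using $\tau_b(\phi) = \tau_b^0\phi^2$ gives $\int_\Omega q^2/\tau_b(\phi) = (1/\tau_b^0)\|q/\phi\|_{L^2(\Omega)}^2$, which yields \eqref{eq:energylaw}.

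The main obstacle is not any single computation — each step is a routine integration by parts — but rather making sure the boundary terms genuinely vanish, which requires stating the boundary conditions carefully (homogeneous Neumann for $\phi$, $\mu$, and for the flux $\mathbf{J}\cdot\mathbf{n}$, or periodicity on $\partial\Omega$), and confirming that $\phi$ stays in $(0,1)$ so that the mobility prefactor $\phi(1-\phi)$ and the relaxation time $\tau_b(\phi)=\tau_b^0\phi^2$ are well-defined and the algebraic bookkeeping of the cross terms is legitimate; I would note these as standing assumptions. A secondary point to handle cleanly is the precise sign and factor arrangement when pulling $M$ out of $\mathbf{J}$, so that $-\frac{1}{M}|\mathbf{J}|^2$ matches the stated $-\int_\Omega M[\cdots]^2$; this is just keeping track of where the single factor of $M$ sits, but it is the step most prone to a slip.
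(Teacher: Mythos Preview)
Your proposal is correct and follows essentially the same route as the paper: multiply \eqref{eq:simp_model}$_1$ by $\mu$ and \eqref{eq:simp_model}$_2$ by $q$, integrate over $\Omega$ with suitable (e.g.\ periodic) boundary conditions, integrate by parts, and add so that the cross terms combine into the full square $-\int_\Omega M[\phi(1-\phi)\nabla\mu-\nabla(A_1(\phi)q)]^2$. Your introduction of the shorthand $\mathbf{J}$ and your explicit flagging of the boundary and well-posedness assumptions are tidy additions but do not change the argument.
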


\begin{proof}  
Multiplying \eqref{eq:simp_model}$_1$ by $\mu$ and integrating over the
  computational domain $\Omega$, assuming suitable boundary
  conditions (e.~g. periodic boundary conditions), and applying
  integration by parts we obtain
$$
\ba{cl}\dis
&\dis\int_\Omega \frac{\partial\phi}{\partial t}\mu
- \int_\Omega \nabla \cdot \left\{\phi(1-\phi)M\Big[\phi(1-\phi)\nabla\mu - \nabla (A_1(\phi)
\, q)\Big]\right\}\mu
\\ \hueco\dis
=&\dis\int_\Omega \frac{\partial\phi}{\partial t}\frac{\delta E_{mix}(\phi)}{\delta
\phi}
+ \int_\Omega \left\{\phi(1-\phi)M\Big[\phi(1-\phi)\nabla\mu - \nabla (A_1(\phi)
\, q)\Big]\right\}\nabla\mu
\\ \hueco\dis
=&\dis\frac{d E_{mix}(\phi)}{dt}
+ \int_\Omega M \Big[\phi(1-\phi)\nabla\mu - \nabla (A_1(\phi)\, q) \Big]
\Big[\phi(1-\phi)\nabla\mu\Big]
=0\,.
\ea
$$
Further, multiplying \eqref{eq:simp_model}$_2$ by $q$ and integrating over $\Omega$ with suitable
boundary conditions yields
$$\ba{cl}\dis
&\dis\int_\Omega\frac{\partial q}{\partial t}q  + \int_\Omega\frac1{\tau_b(\phi)} q^2
+ \int_\Omega A_1(\phi) \nabla\cdot\left\{M\Big[\phi(1-\phi)\nabla\mu - \nabla (A_1(\phi)
\, q)\Big]\right\} q
\\ \hueco\dis
=&\dis\int_\Omega\frac12\frac{\partial q^2}{\partial t}
+ \int_\Omega\frac1{\tau^0_b\phi^2} q^2
+ \int_\Omega \nabla \cdot M\Big[\phi(1-\phi)\nabla\mu - \nabla (A_1(\phi)\, q)\Big]
A_1(\phi)\,q
\\ \hueco\dis
=&\dis\frac{d}{dt}\left(\int_\Omega\frac12 q^2 \right)
+ \frac1{\tau^0_b}\int_\Omega \left(\frac{q}{\phi}\right)^2
\\ \hueco\dis
&\dis - \int_\Omega M\Big[\phi(1-\phi)\nabla\mu - \nabla (A_1(\phi)\, q)\Big]
\nabla(A_1(\phi)\,q)
\\ \hueco\dis
=&\dis\frac{d E_{conf}(q)}{dt}
+ \frac1{\tau^0_b} \left\|\frac{q}{\phi}\right\|^2_{L^2(\Omega)}
\\ \hueco\dis 
&\dis + \int_\Omega M\Big[\phi(1-\phi)\nabla\mu - \nabla (A_1(\phi)\, q)\Big] \Big[ -
\nabla(A_1(\phi)\,q)\Big]
=0\,.
\ea
$$
Then, adding both relations we obtain
\begin{multline*}
\frac{d E_{mix}(\phi)}{dt}
+ \frac{d E_{conf}(q)}{dt}
+ \frac1{\tau^0_b} \left\|\frac{q}{\phi}\right\|^2_{L^2(\Omega)}
\\ \hueco
+ \int_\Omega M\Big[\phi(1-\phi)\nabla\mu - \nabla (A_1(\phi)\, q)\Big]
\Big[\phi(1-\phi)\nabla\mu - \nabla(A_1(\phi)\,q) \Big]=0\,,
\end{multline*}
which is the desired energy law \eqref{eq:energylaw}.
\end{proof}

\subsection{\change{Two-phase model for viscoelastic flow}{Full model (with hydrodynamics)}}\label{sec:viscoelastic}

In this Subsection we consider the full two-phase model for viscoelastic phase separation \eqref{eq:model}.
The total free energy of the system consists of the mixing energy, the conformation energy, the elastic energy and the kinetic energy
\begin{multline}
E_{tot}(\phi,q,\sig,\u)
= E_{mix}(\phi) + E_{conf}(q) + E_{el}(\sig) + E_{kin}(\u)
\\  \hueco
= \int_\Omega \Big(\frac{C_0}2|\nabla\phi|^2 + F(\phi)\Big) + \int_\Omega
\frac12|q|^2 + \int_\Omega \frac12 \tr(\sig) + \int_\Omega \frac12|\u|^2
\,.
\end{multline}
In order to prove that a solution of \eqref{eq:model} dissipates the total free energy in time we need the following lemma.
\begin{lemma}\label{le:lemma1}
The following relation holds
\beq\label{eq:lemma1}
\nabla\cdot(C_0\nabla\phi\otimes\nabla\phi) = -\mu\nabla\phi +
\nabla\left(\frac{C_0}2|\nabla\phi|^2 + F(\phi)\right)\,.
\eeq
\end{lemma}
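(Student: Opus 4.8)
\section*{Proof proposal for Lemma~\ref{le:lemma1}}

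The plan is to verify the identity componentwise, using only the product rule, the symmetry of second derivatives, and the chain rule, and then to substitute the definition $\mu = -C_0\Delta\phi + f(\phi)$ with $f = F'$. Throughout one assumes $\phi$ is smooth enough for these manipulations (say $\phi \in C^2(\Omega)$, or $H^2$ with the identity read in the weak/distributional sense).

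First I would fix an index $i$ and expand the $i$-th component of the left-hand side. Since the tensor $\nabla\phi\otimes\nabla\phi$ has entries $(\partial_i\phi)(\partial_j\phi)$ and the divergence of a tensor acts on the second index, one gets
\[
\big[\nabla\cdot(C_0\nabla\phi\otimes\nabla\phi)\big]_i
= C_0\sum_j \partial_j\big[(\partial_i\phi)(\partial_j\phi)\big]
= C_0\sum_j (\partial_i\partial_j\phi)(\partial_j\phi) + C_0(\partial_i\phi)\sum_j \partial_j\partial_j\phi .
\]
The second group is simply $C_0(\partial_i\phi)\,\Delta\phi$. For the first group I would use the symmetry $\partial_i\partial_j\phi = \partial_j\partial_i\phi$ to recognize it as a perfect derivative:
\[
C_0\sum_j (\partial_i\partial_j\phi)(\partial_j\phi)
= \frac{C_0}{2}\,\partial_i\Big(\sum_j (\partial_j\phi)^2\Big)
= \partial_i\!\left(\frac{C_0}{2}|\nabla\phi|^2\right).
\]

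Next I would eliminate $\Delta\phi$ in favour of $\mu$. From $\mu = -C_0\Delta\phi + f(\phi)$ we have $C_0\Delta\phi = f(\phi) - \mu$, so
\[
C_0(\partial_i\phi)\,\Delta\phi = f(\phi)\,\partial_i\phi - \mu\,\partial_i\phi
= \partial_i\big(F(\phi)\big) - \mu\,\partial_i\phi ,
\]
where the last step is the chain rule together with $F' = f$. Combining the two contributions gives
\[
\big[\nabla\cdot(C_0\nabla\phi\otimes\nabla\phi)\big]_i
= \partial_i\!\left(\frac{C_0}{2}|\nabla\phi|^2 + F(\phi)\right) - \mu\,\partial_i\phi ,
\]
which is precisely the $i$-th component of the right-hand side of \eqref{eq:lemma1}. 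Since $i$ was arbitrary, the vector identity follows.

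There is no real obstacle here: the computation is a routine application of calculus identities, and the only thing to be careful about is stating the regularity assumption on $\phi$ (so that $\Delta\phi$ and the rearrangement of second derivatives make sense) and noting that the result holds for any $F\in C^1$ with $f=F'$, hence in particular for the modified Ginzburg--Landau potential \eqref{eq:GL_mod} and, on $(0,1)$, for the Flory--Huggins potential \eqref{eq:fh}.
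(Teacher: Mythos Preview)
Your proof is correct and follows essentially the same route as the paper: a componentwise expansion of the tensor divergence via the product rule, identifying $C_0\sum_j(\partial_i\partial_j\phi)(\partial_j\phi)$ as $\partial_i\big(\tfrac{C_0}{2}|\nabla\phi|^2\big)$, and then replacing $C_0\Delta\phi$ by $f(\phi)-\mu$ together with the chain rule $f(\phi)\partial_i\phi=\partial_i F(\phi)$. The only cosmetic difference is that the paper carries the basis vectors $e_i$ through the computation and adds and subtracts $f(\phi)\nabla\phi$ explicitly, whereas you substitute for $C_0\Delta\phi$ directly; the logical content is the same.
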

\begin{proof}
For $e_i$ the $i$-th unit vector and $d$ the spatial dimension the following relation holds
\beq
\ba{l}\dis
\nabla\cdot(C_0\nabla\phi\otimes\nabla\phi)
=C_0\sum_{j=1}^d\frac{\partial}{\partial x_j}\left(\sum_{i=1}^d\frac{\partial\phi}{\partial x_i}e_i\cdot\sum_{j=1}^d\frac{\partial\phi}{\partial x_j}e_j^T\right)
\\ \hueco\dis
=C_0\sum_{i,j=1}^d\frac{\partial}{\partial x_j}\left(\frac{\partial\phi}{\partial x_i}\frac{\partial\phi}{\partial x_j}\right)e_i
=C_0\sum_{i,j=1}^d\left(\frac{\partial\phi}{\partial x_i}\frac{\partial^2\phi}{\partial x_j^2}
+\frac{\partial^2 \phi}{\partial x_i\partial x_j}\frac{\partial\phi}{\partial x_j}\right)e_i
\\ \hueco\dis
=C_0\sum_{j=1}^d\frac{\partial^2 \phi}{\partial x_j^2}\cdot\sum_{i=1}^d\frac{\partial\phi}{\partial x_i}e_i
+\frac{C_0}2\sum_{i=1}^d\frac{\partial}{\partial x_i}\sum_{j=1}^d\left(\frac{\partial\phi}{\partial x_j}\right)^2 e_i
\\ \hueco\dis
=C_0\sum_{j=1}^d\frac{\partial^2 \phi}{\partial x_j^2}\cdot\sum_{i=1}^d\frac{\partial\phi}{\partial x_i}e_i
+\frac{C_0}2\sum_{i=1}^d\frac{\partial}{\partial x_i}\sum_{j=1}^d\left(\frac{\partial\phi}{\partial x_j}\right)^2 e_i
+ \left(\frac{\partial F(\phi)}{\partial\phi} - f(\phi) \right) \frac{\partial\phi}{\partial x_i}e_i
\\ \hueco\dis
=\left(C_0\sum_{j=1}^d\frac{\partial^2 \phi}{\partial x_j^2}-f(\phi)\right)\sum_{i=1}^d\frac{\partial\phi}{\partial x_i}e_i
+ \sum_{i=1}^d\frac{\partial}{\partial x_i}e_i\left(\frac{C_0}2\sum_{j=1}^d\left(\frac{\partial\phi}{\partial x_j}\right)^2 + F(\phi)\right)
\\ \hueco\dis
=\big(C_0\Delta\phi-f(\phi)\big)\nabla\phi + \nabla\left(\frac{C_0}2|\nabla\phi|^2 + F(\phi)\right)
\\ \hueco\dis
=-\mu\nabla\phi + \nabla\Big(\frac{C_0}2|\nabla\phi|^2 + F(\phi)\Big)\,.
\ea \nonumber
\eeq
\end{proof}
Now, we introduce the new pressure term $\tilde p = p + \frac{C_0}2|\nabla\phi|^2
+ F(\phi)$. Together with equation \eqref{eq:lemma1} this allows us to rewrite system \eqref{eq:model} as follows
\beq\label{eq:model3}
\bal
\frac{\partial\phi}{\partial t} + \u\cdot\nabla\phi &-
\nabla\cdot\left\{\phi(1-\phi)M\Big[\phi(1-\phi)\nabla\mu - \nabla (A_1(\phi)\,
q)\Big]\right\}=0\,,
\\ \hueco
\frac{\partial q}{\partial t} + \u\cdot\nabla q &+ \frac1{\tau_b(\phi)} q +
A_1(\phi)\nabla\cdot\left\{M\Big[\phi(1-\phi)\nabla\mu - \nabla (A_1(\phi)\,
q)\Big]\right\}=0\,,
\\ \hueco
\frac{\partial\sig}{\partial t} + (\u\cdot\nabla)\sig &-
(\nabla\u)\cdot\sig-\sig\cdot(\nabla\u)^T + \frac1{\tau_s(\phi) }\sig -
B_2(\phi)\Big[\nabla\u+(\nabla\u)^T\Big]=0\,,
\\ \hueco
\frac{\partial\u}{\partial t} + (\u\cdot\nabla)\u &- \nabla\cdot\left\{
\eta(\phi)\Big[\nabla\u+(\nabla\u)^T\Big]\right\} + \nabla \tilde p - \mu\nabla\phi -
\nabla\cdot\sig=0\,,
\\ \hueco
\nabla\cdot\u &= 0\,.
\eal
\eeq

\begin{theorem}\label{th:energylaw4}
System \eqref{eq:model3} obeys the following energy law
\begin{multline}\label{eq:energylaw4}
\frac{dE_{tot}(\phi,q,\sig,\u)}{dt} =
- \frac1{\tau^0_b} \left\|\frac{q}{\phi}\right\|^2_{L^2(\Omega)}
- \int_\Omega M\Big[\phi(1-\phi)\nabla\mu - \nabla (A_1(\phi)\, q)\Big]^2
\\ \hueco 
- \int_\Omega\frac1{2\,\tau_s(\phi) } \tr(\sig)
- \int_\Omega\frac{\eta(\phi)}2\sum_{i,j=1}^d\left( \frac{\partial u_i}{\partial x_j} + \frac{\partial u_j}{\partial x_i}\right)^2\,.
\end{multline}
\end{theorem}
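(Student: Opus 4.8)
The plan is to follow the same testing strategy as in the proof of Theorem~\ref{th:energylaw}, now applied to all four evolution equations in~\eqref{eq:model3}: for each equation I pick the multiplier dual to its energy contribution, integrate over $\Omega$, integrate by parts, and then add the four resulting identities so that the indefinite coupling terms cancel in pairs. Throughout I assume periodic (or otherwise compatible) boundary conditions so that boundary integrals vanish, and I abbreviate $G := \phi(1-\phi)\nabla\mu - \nabla(A_1(\phi)\,q)$.

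First I would multiply~\eqref{eq:model3}$_1$ by $\mu$ and integrate: since $\mu = \delta E_{mix}/\delta\phi$, the time term becomes $\tfrac{d}{dt}E_{mix}(\phi)$, integration by parts turns the mobility term into $\int_\Omega M\,G\cdot[\phi(1-\phi)\nabla\mu]$, and there remains a transport term $\int_\Omega(\u\cdot\nabla\phi)\mu$. Next I would multiply~\eqref{eq:model3}$_2$ by $q$: the time term gives $\tfrac{d}{dt}E_{conf}(q)$, the relaxation term gives $\tfrac1{\tau_b^0}\|q/\phi\|_{L^2(\Omega)}^2$ because $\tau_b(\phi)=\tau_b^0\phi^2$, integration by parts on the last term gives $-\int_\Omega M\,G\cdot\nabla(A_1(\phi)\,q)$, and the transport term $\int_\Omega(\u\cdot\nabla q)\,q=\tfrac12\int_\Omega\u\cdot\nabla q^2$ vanishes after integration by parts using $\nabla\cdot\u=0$. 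Then I would take $\tfrac12$ times the trace of~\eqref{eq:model3}$_3$ (equivalently its Frobenius product with $\tfrac12\I$) and integrate: this produces $\tfrac{d}{dt}E_{el}(\sig)$, the relaxation term $\int_\Omega\tfrac1{2\tau_s(\phi)}\tr(\sig)$, and $-\int_\Omega\sig:\nabla\u$ from the two upper-convected terms (using $\sig=\sig^T$), while the $B_2(\phi)$ term drops out since $\tr(\nabla\u+(\nabla\u)^T)=2\nabla\cdot\u=0$ and the transport term vanishes by incompressibility. Finally I would multiply~\eqref{eq:model3}$_4$ by $\u$: the time term gives $\tfrac{d}{dt}E_{kin}(\u)$, convection and pressure drop out by $\nabla\cdot\u=0$, the viscous term becomes $\int_\Omega\tfrac{\eta(\phi)}2\sum_{i,j=1}^d(\partial_j u_i+\partial_i u_j)^2$ after integration by parts and the symmetrization identity, the elastic term $-\nabla\cdot\sig$ contributes $\int_\Omega\sig:\nabla\u$, and the capillary term $-\mu\nabla\phi$ contributes $-\int_\Omega\mu(\u\cdot\nabla\phi)$.

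Adding the four identities, the transport pair $\int_\Omega(\u\cdot\nabla\phi)\mu$ and $-\int_\Omega\mu(\u\cdot\nabla\phi)$ cancels, the stress--strain pair $-\int_\Omega\sig:\nabla\u$ and $\int_\Omega\sig:\nabla\u$ cancels, and the two mobility contributions combine to $\int_\Omega M\,G\cdot[\phi(1-\phi)\nabla\mu-\nabla(A_1(\phi)q)]=\int_\Omega M\,G^2$; rearranging the remaining terms yields exactly~\eqref{eq:energylaw4}. The only genuinely delicate points are computational: the index bookkeeping showing $\tr((\nabla\u)\cdot\sig)=\tr(\sig\cdot(\nabla\u)^T)=\sig:\nabla\u$ from the symmetry of $\sig$, the symmetrization identity $[\nabla\u+(\nabla\u)^T]:\nabla\u=\tfrac12|\nabla\u+(\nabla\u)^T|^2$, and tracking that it is precisely the rewriting via Lemma~\ref{le:lemma1} and the new pressure $\tilde p$ that lets the capillary and transport terms cancel; beyond that everything follows the integration-by-parts pattern already established for Theorem~\ref{th:energylaw}.
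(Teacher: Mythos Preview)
Your proposal is correct and follows essentially the same approach as the paper: test each equation in~\eqref{eq:model3} with the multiplier dual to its energy contribution ($\mu$, $q$, $\tfrac12\one$, and $\u$ respectively), integrate by parts, and sum so that the $\int_\Omega(\u\cdot\nabla\phi)\mu$ and $\int_\Omega\sig:\nabla\u$ cross terms cancel in pairs. Your write-up is in fact slightly more explicit than the paper's, which omits the vanishing of the $q$-transport term and the computational identities you flag at the end.
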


\begin{proof}
Analogously to the derivation of the energy law~\eqref{eq:energylaw}, $\eqref{eq:model3}_1$ is multiplied
by $\mu$ and \eqref{eq:model3}$_2$ by $q$ and both are integrated. Assuming
suitable boundary conditions we obtain
\begin{multline*}
\frac{d E_{mix}(\phi)}{dt} + \frac{d E_{conf}(q)}{dt}
+ \frac1{\tau^0_b} \left\|\frac{q}{\phi}\right\|^2_{L^2(\Omega)} 
\\
+ \int_\Omega M\Big[\phi(1-\phi)\nabla\mu - \nabla (A_1(\phi)\, q)\Big]^2
+ \int_\Omega\u\cdot\nabla\phi\mu =0\,.
\end{multline*}
Further, we multiply \eqref{eq:model3}$_3$ by $\frac12\one$, where $\one$ is the unit matrix, and integrate over $\Omega$.
Taking into account that for all $\textbf{A} \in \R^{n\times n}, n\in\N, \textbf{A} : \one = \tr(\textbf{A}\cdot \one) = \tr(\textbf{A})$ we get 
\begin{align*}
& \int_\Omega \frac12 \tr\left(\frac{\partial\sig}{\partial t}\right) +
\int_\Omega \frac12 \tr\left((\u\cdot\nabla)\sig\right) -
 \int_\Omega\frac12 \left(\nabla\u:\sig^T + \sig:\nabla\u\right) 
 \\ \hueco
& -
\int_\Omega\frac1{2\,\tau_s(\phi) } \tr(\sig) +
B_2(\phi)\tr(\nabla\u)
\\ \hueco
= \quad & \int_\Omega \frac12 \frac{\partial\,\tr(\sig)}{\partial t} - \int_\Omega \frac12
\tr\left((\nabla\cdot\u)\sig\right) - \int_\Omega\sig:\nabla\u 
\\ \hueco
& +  \int_\Omega\frac1{2\,\tau_s(\phi) } \tr(\sig) + \int_\Omega B_2(\phi) (\nabla\cdot\u)
\\ \hueco
= \quad & \frac{d}{dt}\int_\Omega \frac12 \tr(\sig) - \int_\Omega\sig:\nabla\u +
\int_\Omega\frac1{2\,\tau_s(\phi) } \tr(\sig) = 0\,.
\end{align*}
Multiplying $\eqref{eq:model3}_4$ by $\u$ and integrating over $\Omega$ yields
\begin{align*}
& \int_\Omega \frac{\partial\u}{\partial t}\cdot\u
+ \int_\Omega(\u\cdot\nabla)|\u|^2
+ \int_\Omega\left\{\eta(\phi)\Big[\nabla\u+(\nabla\u)^T\Big]\right\}:\nabla\u
\\ \hueco 
& - \int_\Omega \tilde p(\nabla\cdot\u)
- \mu\nabla\phi\cdot\u
+ \nabla\cdot\sig\cdot\u
\\ \hueco 
= \quad & \int_\Omega \frac12\frac{\partial|\u|^2}{\partial t}
- \int_\Omega(\nabla\cdot\u)|\u|^2
+ \int_\Omega\left\{\eta(\phi)\Big[|\nabla\u|^2+Tr\left((\nabla\u)^2\right)\Big]\right\}
\\ \hueco 
& - \int_\Omega\u\cdot\nabla\phi\mu
+ \int_\Omega\sig:\nabla\u
\\ \hueco
= \quad & \frac{d}{dt}\int_\Omega \frac12|\u|^2
+ \int_\Omega\frac{\eta(\phi)}2\sum_{i,j=1}^d\left( \frac{\partial u_i}{\partial x_j} + \frac{\partial u_j}{\partial x_i}\right)^2
- \int_\Omega\u\cdot\nabla\phi\mu + \int_\Omega\sig:\nabla\u
=0\,.
\end{align*}
Summing up the above relations we obtain the energy law \eqref{eq:energylaw4}.
\end{proof}

\begin{remark}
We should point out that the elastic stress tensor $\sig$ does not necessarily need to be positive definite.
Thus, $tr(\sig)$ in the energy law~\eqref{eq:energylaw4} is not necessarily positive and could therefore interfere with the energy dissipation.
To control this we introduce the so-called conformation tensor $\c$, $\c :=\frac{1}{B_2(\phi)} \sig + \one$, where $\one$ is the identity matrix.
By its definition the conformation tensor $\c$ is positive definite. In \cite{hu} \textit{Hu and Leli\`evre} studied the classical Oldroyd-B model with $B_2(\phi)=$ const. and $\tau_s(\phi)=$ const. They were able to prove that
if the determinant of the initial conformation tensor is greater than one, then
$\tr(\sig)>0$ for all times. This result indicates that it is important to control the initial data for the elastic stress $\sig$ in such a way that the determinant of $\c$ is enough large in order to get an elastic stress tensor which remains positive definite as well. 
\end{remark}

\begin{remark} 
\change{In Subsection~\ref{sec:splitting} we will present a new splitting scheme. To this end we have to
introduce a new pressure term $\hat{p} = \tilde p - \phi\mu$ which allows us to rewrite system \eqref{eq:model3} as follows}
{Since the full model is incompressible, it holds $\nabla\cdot (\u w) = \u\cdot\nabla w + \nabla\cdot\u w = \u\cdot\nabla w, \ \ w\in\{\phi,q\},$ for the advection terms, allowing us to rewrite model \eqref{eq:model3} as follows}
\beq\label{eq:model3b}
\ba{l}\dis
\frac{\partial\phi}{\partial t} + \nabla\cdot (\u\phi) -
\nabla\cdot\left\{\phi(1-\phi)M\Big[\phi(1-\phi)\nabla\mu - \nabla (A_1(\phi)\,
q)\Big]\right\}=0\,,
\\ \hueco\dis
\frac{\partial q}{\partial t} + \nabla\cdot (\u q) + \frac1{\tau_b(\phi)} q +
A_1(\phi)\nabla\cdot\left\{M\Big[\phi(1-\phi)\nabla\mu - \nabla (A_1(\phi)\,
q)\Big]\right\}=0\,,
\\ \hueco\dis
\frac{\partial\sig}{\partial t} + (\u\cdot\nabla)\sig -
(\nabla\u)\cdot\sig-\sig\cdot(\nabla\u)^T + \frac1{\tau_s(\phi) }\sig -
B_2(\phi)\Big[\nabla\u+(\nabla\u)^T\Big]=0\,,
\\ \hueco\dis
\frac{\partial\u}{\partial t} + (\u\cdot\nabla)\u - \nabla\cdot\left\{
\eta(\phi)\Big[\nabla\u+(\nabla\u)^T\Big]\right\} + \nabla \hat p + \phi \nabla\mu  -
\nabla\cdot\sig=0\,,
\\ \hueco\dis
\quad\nabla\cdot\u=0\,\change{.}{,}
\ea
\eeq
\change{Note that for the advection term of the Cahn-Hilliard equation \eqref{eq:model3b}$_1$ it holds $\nabla\cdot (\u\phi) = \u\cdot\nabla\phi + \nabla\cdot\u\phi = \u\cdot\nabla\phi$, thanks  the incompressibility condition $\eqref{eq:model3b}_5$.}
{where the pressure term $\hat{p} = \tilde p - \phi\mu$. 
This model will be useful for our new splitting scheme in Subsection~\ref{sec:splitting}.}
\end{remark}
In what follows we will write for the sake of simplicity $p$ instead of $\tilde p$ and $\hat p$.

\section{Numerical schemes}\label{sec:num}
\subsection{Schemes for the simplified model}

We start this Subsection proposing the one step numerical scheme for the simplified model (\ref{eq:simp_model}). We consider an uniform
partition of the time interval $[0,T]$ with a constant time step $\Delta t$. Given
$(\phi^n,q^n)$ from the previous time step we compute $(\phi^{n+1},q^{n+1})$
such that
\beq\label{eq:simp_scheme}
\bal
\frac{\phi^{n+1} - \phi^n}{\Delta t} & -
\nabla\cdot\left\{\phi^n(1-\phi^n)M\Big[\phi^n(1-\phi^n)\nabla\mu^{
n+\frac12} - \nabla (A_1(\phi^n) \, q^{n+\frac12})\Big]\right\}=0\,,
\\ \hueco
\frac{q^{n+1} - q^n}{\Delta t} & + \frac1{\tau_b(\phi^n)} q^{n+\frac12} 
\\
& \quad \ \ + A_1(\phi^n)\nabla\cdot\left\{M\Big[\phi^n(1-\phi^n)\nabla\mu^{n+\frac12}
- \nabla (A_1(\phi^n) \, q^{n+\frac12})\Big]\right\}=0\,,
\eal
\eeq
where
$$
\mu^{n+\frac12} = -C_0\Delta\phi^{n+\frac12} + f(\phi^{n+1},\phi^n)\,.
$$
Here we use the notations $\phi^{n+\frac12}:=\frac{\phi^{n+1} + \phi^n}2$ and $q^{n+\frac12}:=\frac{q^{n+1} + q^n}2$ that are the Crank-Nicolson-type approximations.
\begin{theorem} \label{th:simp_scheme}
Let $f(\phi^{n+1}, \phi^n)$ represent a suitable linearized approximation of $f(\phi) = F'(\phi)$. Then the resulting 
numerical scheme \eqref{eq:simp_scheme} is linear and satisfies the following discrete version of the energy law \eqref{eq:energylaw}
\begin{multline} \label{eq:discreteenergylaw}
\frac{E_{tot}(\phi^{n+1},q^{n+1}) - E_{tot}(\phi^{n},q^{n})}{\Delta t}
= - ND^{n+1}_{phobic}
- \frac1{\tau^0_b}\left\|\frac{q^{n+\frac12}}{\phi^n}\right\|^2_{L^2(\Omega)}
\\ \hueco
- \int_\Omega M\Big[\phi^n(1-\phi^n)\nabla\mu^{n+\frac12} - \nabla (A_1(\phi^n) \, q^{n+\frac12})\Big]^2
\,,
\end{multline}
where
$$
ND^{n+1}_{phobic}:=\int_\Omega f(\phi^{n+1},\phi^n)\frac{\phi^{n+1} -
\phi^n}{\Delta t}
- \int_\Omega \frac{F(\phi^{n+1}) - F(\phi^n)}{\Delta t}\,.
$$
Depending on the approximation considered for $f(\phi^{n+1},\phi^n)$, we obtain different numerical schemes with different discrete energy laws, see Remark~\ref{re:potential}.
\end{theorem}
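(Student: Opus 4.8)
The plan is to reproduce, at the discrete level, the two-step testing argument used for the continuous energy law in Theorem~\ref{th:energylaw}, with the Crank--Nicolson averages $\phi^{n+\frac12},q^{n+\frac12}$ supplying the telescoping of the quadratic terms and the old-time coefficients $\phi^n(1-\phi^n)$, $A_1(\phi^n)$, $\tau_b(\phi^n)$ ensuring both linearity and the algebraic cancellation between the two equations. First, \emph{linearity}: by assumption $f(\phi^{n+1},\phi^n)$ is affine in the unknown $\phi^{n+1}$, and every coefficient in \eqref{eq:simp_scheme} is frozen at the known state $\phi^n$; hence $\mu^{n+\frac12}=-C_0\Delta\phi^{n+\frac12}+f(\phi^{n+1},\phi^n)$ is affine in $\phi^{n+1}$ and $q^{n+\frac12}$ is affine in $q^{n+1}$, so \eqref{eq:simp_scheme} is a linear system for $(\phi^{n+1},q^{n+1})$.

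Next I would test \eqref{eq:simp_scheme}$_1$ with $\mu^{n+\frac12}$ and integrate over $\Omega$. Splitting $\mu^{n+\frac12}=-C_0\Delta\phi^{n+\frac12}+f(\phi^{n+1},\phi^n)$ and integrating by parts (periodic boundary conditions), the gradient contribution becomes $\frac{C_0}{\Delta t}\int_\Omega\nabla(\phi^{n+1}-\phi^n)\cdot\nabla\phi^{n+\frac12}=\frac{C_0}{2\Delta t}\int_\Omega\big(|\nabla\phi^{n+1}|^2-|\nabla\phi^n|^2\big)$ via $(a-b)\cdot\frac{a+b}2=\frac{|a|^2-|b|^2}2$. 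For the potential contribution I would add and subtract $\int_\Omega\frac{F(\phi^{n+1})-F(\phi^n)}{\Delta t}$, which by definition produces the $F$-part of $\frac{E_{mix}(\phi^{n+1})-E_{mix}(\phi^n)}{\Delta t}$ plus exactly $ND^{n+1}_{phobic}$. Integrating the flux term by parts turns $-\int_\Omega\nabla\cdot\{\phi^n(1-\phi^n)M[\cdots]\}\mu^{n+\frac12}$ into $\int_\Omega M[\cdots]\cdot\big(\phi^n(1-\phi^n)\nabla\mu^{n+\frac12}\big)$, where $[\cdots]=\phi^n(1-\phi^n)\nabla\mu^{n+\frac12}-\nabla(A_1(\phi^n)q^{n+\frac12})$. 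This gives the first intermediate identity
$$\frac{E_{mix}(\phi^{n+1})-E_{mix}(\phi^n)}{\Delta t}+ND^{n+1}_{phobic}+\int_\Omega M[\cdots]\cdot\big(\phi^n(1-\phi^n)\nabla\mu^{n+\frac12}\big)=0.$$

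Then I would test \eqref{eq:simp_scheme}$_2$ with $q^{n+\frac12}$: the time-difference term gives $\frac1{2\Delta t}\int_\Omega(|q^{n+1}|^2-|q^n|^2)=\frac{E_{conf}(q^{n+1})-E_{conf}(q^n)}{\Delta t}$, the relaxation term gives $\int_\Omega\frac1{\tau^0_b(\phi^n)^2}(q^{n+\frac12})^2=\frac1{\tau^0_b}\big\|\frac{q^{n+\frac12}}{\phi^n}\big\|^2_{L^2(\Omega)}$, and integration by parts of the divergence term gives $-\int_\Omega M[\cdots]\cdot\nabla(A_1(\phi^n)q^{n+\frac12})$. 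Adding the two intermediate identities, the two ``cross'' contributions combine into $\int_\Omega M[\cdots]\cdot[\cdots]=\int_\Omega M\big[\phi^n(1-\phi^n)\nabla\mu^{n+\frac12}-\nabla(A_1(\phi^n)q^{n+\frac12})\big]^2$; rearranging yields precisely \eqref{eq:discreteenergylaw}.

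The only genuinely delicate point is the double-well term: the continuous chain-rule identity $\partial_t\phi\,f(\phi)=\partial_t F(\phi)$ has no exact discrete counterpart for a \emph{linear} $f(\phi^{n+1},\phi^n)$, so a residual inevitably survives; collecting it as $ND^{n+1}_{phobic}$ is the natural bookkeeping move, and proving its sign (hence genuine energy decay) depends on the concrete linearization and is therefore deferred to Remark~\ref{re:potential}. Everything else is routine integration by parts plus the telescoping identities above; note in particular that positivity of $M$ alone (not of $\phi^n(1-\phi^n)$) suffices for the flux term to be dissipative, since the mobility factor $\phi^n(1-\phi^n)$ cancels algebraically when the two tested equations are summed — which is exactly why it is frozen at the old time level.
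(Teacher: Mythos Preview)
Your proposal is correct and follows essentially the same approach as the paper: test \eqref{eq:simp_scheme}$_1$ with $\mu^{n+\frac12}$, test \eqref{eq:simp_scheme}$_2$ with $q^{n+\frac12}$, integrate by parts, and sum. You in fact supply more detail than the paper's proof, spelling out the telescoping identity $(a-b)\cdot\frac{a+b}{2}=\frac{|a|^2-|b|^2}{2}$ and the add-and-subtract trick that isolates $ND^{n+1}_{phobic}$, whereas the paper simply states the resulting identities and refers back to Theorem~\ref{th:energylaw}.
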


\begin{proof}
It is clear that the proposed scheme is linear.
The discrete mixing and conformation energy can be derived following the same calculations presented in the proof of Theorem~\ref{th:energylaw}.
Multiplying \eqref{eq:simp_scheme}$_1$ by $\mu^{n+\frac12}$, integrating over $\Omega$ and applying suitable boundary conditions yields
\begin{multline*}
\frac{E_{mix}(\phi^{n+1}) - E_{mix}(\phi^{n}) }{\Delta t}
+ ND^{n+1}_{phobic}
\\ \hueco
+ \int_\Omega M\Big[\phi^n(1-\phi^n)\nabla\mu^{n+\frac12} - \nabla
(A_1(\phi^n) \, q^{n+\frac12})\Big]\Big[\phi^n(1-\phi^n)\nabla \mu^{n+\frac12}\Big] =
0\,.
\end{multline*}
Analogously,  multiplying \eqref{eq:simp_scheme}$_2$ by $q^{n+\frac12}$,  integrating over $\Omega$ while assuming suitable boundary conditions implies
\begin{multline*}
\frac{E_{conf}(q^{n+1}) - E_{conf}(q^n)}{\Delta t}
+ \frac1{\tau^0_b}\left\|\frac{q^{n+\frac12}}{\phi^n}\right\|^2_{L^2(\Omega)}
\\ \hueco
+ \int_{\Omega}M\Big[\phi^n(1-\phi^n)\nabla\mu^{n+\frac12} - \nabla
(A_1(\phi^n) \, q^{n+\frac12})\Big] \Big[ - \nabla
(A_1(\phi^n)q^{n+\frac12})\Big] =0\,.
\end{multline*}
Summing up both relations leads to the discrete energy conservation law \eqref{eq:discreteenergylaw}.
\end{proof}


The numerical scheme \eqref{eq:simp_scheme} is linear and first order in time. We propose a linear second order numerical scheme by using a
second order extrapolation for the explicit terms, arriving at a two-step numerical scheme, obeying an analogous discrete energy law as scheme \eqref{eq:simp_scheme}.
The proposed linear second order numerical scheme reads
\beq\bal\label{eq:simp_scheme2}
&\frac{\phi^{n+1} - \phi^n}{\Delta t} 
\\
-&\nabla\cdot \left\{\phi^{n-\frac12}(1-\phi^{n-\frac12})M\Big[\phi^{n-\frac12}(1-\phi^{n-\frac12})\nabla\mu^{n+\frac12}  - \nabla (A_1(\phi^{n-\frac12}) \, q^{n+\frac12})\Big]\right\} =0\,,
\\ \hueco
& \frac{q^{n+1} - q^n}{\Delta t} + \frac1{\tau_b(\phi^{n-\frac12})} q^{n+\frac12}
\\
& \quad +A_1(\phi^{n-\frac12})\nabla\cdot\left\{M\Big[\phi^{n-\frac12}
(1-\phi^{n-\frac12})\nabla\mu^{n+\frac12} - \nabla (A_1(\phi^{n-\frac12}) \,
q^{n+\frac12})\Big]\right\}=0\,,
\eal\eeq
where
$$
\mu^{n+\frac12} = -C_0\Delta\phi^{n+\frac12} + f(\phi^{n+1},\phi^n)\,,
$$
and
$$
\phi^{n-\frac12} := \frac{3\phi^n-\phi^{n-1}}2
$$
is the second order extrapolation at the intermediate old time level $t_{n-1/2}$.
In order to compute the pair $(\phi^1,q^1)$ from $(\phi^0,q^0)$ a second order one-step nonlinear scheme could be considered.
We overcome this by setting $\phi^{-1}:=\phi^0$ and thus solving the first order scheme in the first time step.
As long as the initial data is sufficiently smooth, the influence is usually negligible for $T\gg 0$,
see the experimental order of convergence (EOC) presented in Section~\ref{sec:simulations}, Table~\ref{ta:eoc}.

\begin{definition}
A numerical scheme is called energy-stable if for any $n \in \N$ 
$$ 
E_{tot}(w^{n+1}) \le E_{tot}(w^n)\,, 
$$
where $w^{n+1}$ and $w^n$ are the solution vectors at times $t_{n+1}$ and $t_n$, respectively.
\end{definition}

\begin{remark}\label{re:potential}
  The choice of the approximation for $f(\phi^{n+1},\phi^n)$ strongly
  depends on the given potential $F(\phi)$. Anyway, in order to
  obtain an energy-stable numerical scheme it is necessary that
  $ND^{n+1}_{phobic}\ge 0$, i.~e.
$$
\int_\Omega f(\phi^{n+1},\phi^n)\frac{\phi^{n+1} -
\phi^n}{\Delta t}
\ge \int_\Omega \frac{F(\phi^{n+1}) - F(\phi^n)}{\Delta t}\,.
$$
In the literature the Ginzburg-Landau potential $F_{pol}$
\eqref{GL_pot} is often used, however with the above-mentioned
modification proposed, e.~g., by \textit{Wu, van Zwieten and van der Zee}~\cite{convex}. 
This potential $\tilde{F}_{pol}$ \eqref{eq:GL_mod} has a bounded second derivative,
$$
  \| f' \|_{L^\infty(\R)} = 
  \| f' \|_{L^\infty(-1,1)} = 2 .
$$
For this case we propose to use, following
\textit{Guill{\'e}n-Gonz{\'a}lez, Rodr{\'i}guez-Bellido and Tierra}~\cite{nematic}, 
the linear first order approximation
\beq\label{eq:pol1}
f_1(\phi^{n+1},\phi^n)=f(\phi^n)+\frac12||f'||_{L^\infty(\R)} (\phi^{n+1}-\phi^n) = f(\phi^n)+\phi^{n+1}-\phi^n\,,
\eeq
which has been shown~\cite{nematic} to satisfy $ND^{n+1}_{phobic}\ge 0$.

For the linear second order approximation we suggest to use the
convex-concave splitting of the potential, which has been proposed by
\textit{Wu et al.}~\cite{convex}. The
corresponding approximation for $f(\phi^{n+1},\phi^n)$ consists
of two second order Taylor approximations and reads
\beq\bal\label{eq:pol2}
f_2(\phi^{n+1},\phi^n)= & f_{vex}(\phi^{n+1}) - \frac{\phi^{n+1}-\phi^n}2 f'_{vex}(\phi^{n+1}) 
\\ \hueco
& +f_{cave}(\phi^n) + \frac{\phi^{n+1}-\phi^n}2 f'_{cave}(\phi^n)\,.
\eal\eeq
Since the convex part \change{}{reads} $f_{vex}=2\phi$, \change{}{approximation} (\ref{eq:pol2}) is linear. Note
that the derivative of the concave part, $f_{cave}=\phi^3-3\phi$, is
nonlinear and thus calculated explicitly. Further, to achieve energy
stability by using approximation \eqref{eq:pol2}, the chemical
potential has to be modified in the following way
\beq\bal\label{eq:modchem}
\mu^{n+\frac12} = &-C_0\Delta\phi^{n+\frac12} +
f_2(\phi^{n+1},\phi^n) 
\\ \hueco
&- \Delta t \frac{\left(\|f'_{vex}\|_{L^\infty(-1,1)}
+\|-f'_{cave}\|_{L^\infty(-1,1)}\right)^2}{16}
\Delta\phi^{n+\frac12} \,.
\eal\eeq

We note that the Flory-Huggins potential $F_{log}$ is logarithmic and
its derivatives are unbounded. Consequently, the choice of linear
approximations for $f(\phi^{n+1},\phi^n)$ while utilizing this more 
accurate potential is severely limited.
To ensure energy dissipation without modifying the potential, it is
necessary to use a nonlinear approximation for
$f(\phi^{n+1},\phi^n)$.  Since we are focusing on linear
schemes we propose to use the second order Taylor approximation
\beq\label{eq:OD2}
f_3(\phi^{n+1},\phi^n) = f(\phi^n) + \frac{\phi^{n+1}-\phi^n}2 f'(\phi^n)\,.
\eeq
This is called the ``optimal dissipation 2'' (OD2) approximation, see \cite{GT1},
because it leads to $ND^{n+1}_{phobic}=\mathcal{O}(\Delta t^2)$.
However, using this approximation it is not possible to control the sign of $ND^{n+1}_{phobic}$. Nevertheless, our numerical simulations presented in
Section~\ref{sec:simulations} suggest that the dissipation of the total energy is not violated.

In the recent work \cite{flory} \textit{Yang and Zhao} have proposed a modification of $F_{log}$ introducing a suitable cut off function close  to the boundaries in order to achieve the boundedness of the derivatives. 
Consequently, we can, e.~g., use the above mentioned convex-concave splitting \eqref{eq:pol2} with a modified chemical potential \eqref{eq:modchem}.
We may thus achieve a linear, second order and provably energy-stable numerical scheme, using a modified Flory-Huggins potential. Verification of this question is left for a future work.
\end{remark}

\subsection{Coupled schemes for \change{two-phase viscoelastic flows}{the full model}}\label{sec:coupled}

In this Subsection we present fully coupled linear energy dissipative schemes for the full \change{two-phase viscoelastic model \eqref{eq:model}.}{two-phase model for viscoelastic phase separation \eqref{eq:model}.}
Given \\ $(\phi^n, q^n, \sig^n, \u^n)$ from the previous time step we compute \\
$(\phi^{n+1}, q^{n+1}, \sig^{n+1}, \u^{n+1}, p^{n+1})$ such that
\beq\label{eq:scheme3}
\bal
\frac{\phi^{n+1} - \phi^n}{\Delta t} &+ \u^{n+1}\cdot\nabla\phi^n 
\\ 
& - \nabla\cdot\left\{\phi^n(1-\phi^n)M\Big[\phi^n(1-\phi^n)\nabla\mu^{
n+\frac12} - \nabla (A_1(\phi^n) \, q^{n+\frac12})\Big]\right\}=0\,,
\\ \hueco
\frac{q^{n+1} - q^n}{\Delta t} &+ \u^{n}\cdot\nabla q^{n+\frac12}
+ \frac1{\tau_b(\phi^n)} q^{n+\frac12}
\\ 
&+ A_1(\phi^n)\nabla\cdot\left\{M\Big[\phi^n(1-\phi^n)\nabla\mu^{n+\frac12}
- \nabla (A_1(\phi^n) \, q^{n+\frac12})\Big]\right\}=0\,,
\\ \hueco
\frac{\sig^{n+1} - \sig^n}{\Delta t} &+ (\u^{n}\cdot\nabla)\sig^n -
(\nabla\u^{n+1})\cdot\sig^n-\sig^n\cdot\left(\nabla\u^{n+1}\right)^T
\\ 
&+ \frac1{\tau_s(\phi^{n+\frac12})}\sig^n -
B_2(\phi^{n+\frac12})\Big[\nabla\u^{n+1}+(\nabla\u^{n+1})^T\Big]=0\,,
\\ \hueco
\frac{\u^{n+1} - \u^n}{\Delta t} &+ (\u^n\cdot\nabla)\u^{n+1} -
\nabla\cdot\left\{\eta(\phi^n)\Big[\nabla\u^{n+1}+(\nabla\u^{n+1})^T\Big]\right\} 
\\ \
&+ \nabla p^{n+1} - \mu^{n+\frac12}\nabla\phi^{n} -
\nabla\cdot\sig^n=0\,,
\\ \hueco
\nabla\cdot\u^{n+1} &= 0\,,
\eal
\eeq
where
$$
\mu^{n+\frac12} = -C_0\Delta\phi^{n+\frac12} + f(\phi^{n+1},\phi^n)\,.
$$
\begin{theorem}
The numerical scheme \eqref{eq:scheme3} is linear (up to the approximation
considered for $f(\phi^{n+1},\phi^n)$) and satisfies the discrete version of the energy law \eqref{eq:energylaw4}
\begin{multline}\label{eq:energylaw5}
\frac{E_{tot}(\phi^{n+1},q^{n+1},\sig^{n+1},\u^{n+1})-E_{tot}(\phi^{n},q^{n},\sig^n,\u^n)}{\Delta t}
= - ND^{n+1}_{phobic}
\\ \hueco
- \int_\Omega M\Big[\phi^n(1-\phi^n)\nabla\mu^{n+\frac12} - \nabla (A_1(\phi^n) \, q^{n+\frac12})\Big]^2
- \frac1{\tau^0_b}\left\|\frac{q^{n+\frac12}}{\phi^n}\right\|^2_{L^2(\Omega)}
\\ \hueco
- \int_\Omega\frac1{2\,\tau_s(\phi^{n+\frac12})} \tr(\sig^n)
- \frac1{2\,\Delta t}\|\u^{n+1} - \u^n\|^2_{L^2(\Omega)}
\\ \hueco
- \int_\Omega\frac{\eta(\phi^n)}2\sum_{i,j=1}^d\left( \frac{\partial u^{n+1}_i}{\partial x_j} + \frac{\partial u^{n+1}_j}{\partial x_i}\right)^2 
\,,
\end{multline}
where
$$
ND^{n+1}_{phobic}:=\int_\Omega f(\phi^{n+1},\phi^n)\frac{\phi^{n+1} -
\phi^n}{\Delta t}
- \int_\Omega \frac{F(\phi^{n+1}) - F(\phi^n)}{\Delta t}\,.
$$
\end{theorem}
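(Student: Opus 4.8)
The plan is to mimic, at the discrete level, the energy estimate in the proof of Theorem~\ref{th:energylaw4}: test each of the five equations in \eqref{eq:scheme3} against a suitable multiplier, integrate over $\Omega$ under periodic (or otherwise admissible) boundary conditions, and add. Linearity is immediate once $f(\phi^{n+1},\phi^n)$ is one of the linear choices from Remark~\ref{re:potential}: every remaining term of \eqref{eq:scheme3} is affine in $(\phi^{n+1},q^{n+1},\sig^{n+1},\u^{n+1},p^{n+1})$ because all other factors ($\phi^n$, $\u^n$, $\sig^n$, $A_1(\phi^n)$, $\eta(\phi^n)$, $\tau_b(\phi^n)$, $\tau_s(\phi^{n+\frac12})$, $B_2(\phi^{n+\frac12})$) are frozen at the old time level; in particular the couplings $\u^{n+1}\cdot\nabla\phi^n$, $\mu^{n+\frac12}\nabla\phi^n$, $(\nabla\u^{n+1})\cdot\sig^n$ and $(\u^n\cdot\nabla)\u^{n+1}$ are all linear.

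For the energy identity I would multiply $\eqref{eq:scheme3}_1$ by $\mu^{n+\frac12}$, $\eqref{eq:scheme3}_2$ by $q^{n+\frac12}$, $\eqref{eq:scheme3}_3$ by $\frac12\one$ (using $\textbf{A}:\one=\tr(\textbf{A})$), and $\eqref{eq:scheme3}_4$ by $\u^{n+1}$. The discrete time-derivative terms are reduced by the elementary identity $(a-b)\tfrac{a+b}2=\tfrac12(a^2-b^2)$, applied to the $-C_0\Delta\phi^{n+\frac12}$ contribution and to the whole $q$-equation, together with $\tr(\sig^{n+1}-\sig^n)=\tr(\sig^{n+1})-\tr(\sig^n)$ for the elastic energy; the $f(\phi^{n+1},\phi^n)$ part of $\mu^{n+\frac12}$ reproduces $\tfrac1{\Delta t}\int_\Omega(F(\phi^{n+1})-F(\phi^n))$ plus, by its very definition, $ND^{n+1}_{phobic}$. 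For the kinetic term one instead uses $a\cdot(a-b)=\tfrac12(|a|^2-|b|^2+|a-b|^2)$, which is exactly what produces the extra numerical dissipation $-\tfrac1{2\Delta t}\|\u^{n+1}-\u^n\|^2$ absent from the continuous law. The diffusion/mobility blocks of $\eqref{eq:scheme3}_1$ and $\eqref{eq:scheme3}_2$ combine as in the proof of Theorem~\ref{th:simp_scheme} into $-\int_\Omega M[\phi^n(1-\phi^n)\nabla\mu^{n+\frac12}-\nabla(A_1(\phi^n)q^{n+\frac12})]^2$, and, since $\tau_b(\phi^n)=\tau_b^0(\phi^n)^2$, the bulk relaxation term gives $-\tfrac1{\tau_b^0}\|q^{n+\frac12}/\phi^n\|^2$.

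It then remains to dispose of the transport and coupling terms. Because the scheme imposes $\nabla\cdot\u^{n+1}=0$ at every step --- hence $\nabla\cdot\u^n=0$ as well, for divergence-free initial data --- integration by parts kills $\int_\Omega(\u^n\cdot\nabla)\u^{n+1}\cdot\u^{n+1}$, $\int_\Omega(\u^n\cdot\nabla q^{n+\frac12})q^{n+\frac12}$, $\int_\Omega\tr((\u^n\cdot\nabla)\sig^n)$ and $\int_\Omega\nabla p^{n+1}\cdot\u^{n+1}$, and also makes the $B_2$-term of $\eqref{eq:scheme3}_3$ vanish (its trace is $2\nabla\cdot\u^{n+1}=0$). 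Using the symmetry of $\sig^n$ (propagated by the scheme from symmetric initial data), the multiplier $\frac12\one$ turns the velocity-stress coupling terms of $\eqref{eq:scheme3}_3$ into $-\int_\Omega\sig^n:\nabla\u^{n+1}$, which cancels exactly with $\int_\Omega\sig^n:\nabla\u^{n+1}$ coming from $-\nabla\cdot\sig^n$ tested by $\u^{n+1}$; likewise the capillary cross term $\int_\Omega(\u^{n+1}\cdot\nabla\phi^n)\mu^{n+\frac12}$ from $\eqref{eq:scheme3}_1$ is cancelled by $-\int_\Omega\mu^{n+\frac12}\nabla\phi^n\cdot\u^{n+1}$ from $\eqref{eq:scheme3}_4$. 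What survives from $\eqref{eq:scheme3}_3$ is the shear relaxation term $-\int_\Omega\tfrac1{2\tau_s(\phi^{n+\frac12})}\tr(\sig^n)$, and the viscous term of $\eqref{eq:scheme3}_4$ yields $\int_\Omega\eta(\phi^n)[\nabla\u^{n+1}+(\nabla\u^{n+1})^T]:\nabla\u^{n+1}=\int_\Omega\tfrac{\eta(\phi^n)}2\sum_{i,j=1}^d\left(\frac{\partial u^{n+1}_i}{\partial x_j}+\frac{\partial u^{n+1}_j}{\partial x_i}\right)^2$. Collecting the surviving contributions gives \eqref{eq:energylaw5}.

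The main difficulty is purely one of bookkeeping: one has to verify that the time levels in \eqref{eq:scheme3} are arranged so that every transport/coupling term is either skew-symmetric (and thus annihilated by $\nabla\cdot\u^n=0$) or possesses a partner of opposite sign. The delicate choices to check are: the advection in the $\phi$-equation is evaluated at the \emph{new} velocity $\u^{n+1}$ while the capillary force in the momentum equation carries the \emph{old} $\phi^n$, so that the two cross terms match; the elastic coupling is evaluated consistently with $\sig^n$ and $\nabla\u^{n+1}$ in both $\eqref{eq:scheme3}_3$ and $\eqref{eq:scheme3}_4$; and the incompressibility constraint is enforced at level $n+1$ (and assumed at the initial time). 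One should also keep in mind that only the kinetic-energy test produces a squared increment $\|\u^{n+1}-\u^n\|^2$, because there the test function is $\u^{n+1}$ rather than a midpoint value, whereas the $\nabla\phi$- and $q$-contributions telescope exactly; this asymmetry is precisely what makes \eqref{eq:energylaw5} differ from a pure Crank-Nicolson identity.
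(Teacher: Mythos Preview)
Your proposal is correct and follows essentially the same route as the paper: test $\eqref{eq:scheme3}_1$--$\eqref{eq:scheme3}_4$ with $\mu^{n+\frac12}$, $q^{n+\frac12}$, $\tfrac12\one$ and $\u^{n+1}$ respectively, use the polarization identity $a\cdot(a-b)=\tfrac12(|a|^2-|b|^2+|a-b|^2)$ for the kinetic term, and verify that the transport and coupling terms either vanish by the divergence-free constraint or cancel in pairs. Your write-up is in fact more explicit than the paper's about the bookkeeping of these cancellations.
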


\begin{proof}
Analogously to the derivation of the discrete energy law \eqref{eq:discreteenergylaw}, \eqref{eq:scheme3}$_1$ is multiplied by $\mu^{n+\frac12}$ and  \eqref{eq:scheme3}$_2$
by $q^{n+\frac12}$.  Integrating both equations over $\Omega$ and summing them up we obtain
\begin{multline*}
\frac{E_{mix}(\phi^{n+1}) - E_{mix}(\phi^n)}{\Delta t} + \frac{E_{conf}(q^{n+1}) - E_{conf}(q^n)}{\Delta t}
+ ND^{n+1}_{phobic}
\\ \hueco
+ \frac1{\tau^0_b}\left\|\frac{q^{n+\frac12}}{\phi^n}\right\|^2_{L^2(\Omega)}
+ \int_{\Omega}M\Big[\phi^n(1-\phi^n)\nabla\mu^{n+\frac12}-\nabla (A_1(\phi^n) \, q^{n+\frac12})\Big]^2
\\ \hueco
+ \int_\Omega\u^{n+1}\cdot\nabla\phi^n\mu^{n+\frac12} = 0\,.
\end{multline*}
Further, we multiply \eqref{eq:scheme3}$_3$ by $\frac12\one$ and apply analogous calculations as for the shear stress part of the continuous energy law~(\ref{eq:energylaw4})
$$
\frac{E_{el}(\sig^{n+1}) - E_{el}(\sig^n)}{\Delta t} - \int_\Omega\sig^n:\nabla\u^{n+1} +
\int_\Omega\frac1{2\,\tau_s(\phi^{n+\frac12}) } \tr(\sig^n) = 0\,.
$$
Multiplying \eqref{eq:scheme3}$_4$ by $\u^{n+1}$ and integrating over $\Omega$ leads to
\begin{multline*}
\int_\Omega \frac{\u^{n+1} - \u^n}{\Delta t} \cdot\u^{n+1}
+ \int_\Omega\frac{\eta(\phi^n)}2\sum_{i,j=1}^d\left( \frac{\partial
u^{n+1}_i}{\partial x_j} + \frac{\partial u^{n+1}_j}{\partial x_i}\right)^2
\\ \hueco
- \int_\Omega\u^{n+1}\cdot\nabla\phi^n\mu^{n+\frac12}
+ \int_\Omega\sig^n:\nabla\u^{n+1}=0\,,
\end{multline*}
where
\begin{align*}
\int_\Omega \frac{\u^{n+1} - \u^n}{\Delta t} \cdot\u^{n+1}
&= \frac1{\Delta t} \int_\Omega |\u^{n+1}|^2 - \u^n\cdot\u^{n+1}
\\ \hueco
&= \frac1{2\,\Delta t} \int_\Omega |\u^{n+1}-\u^n|^2 + |\u^{n+1}|^2 - |\u^n|^2
\\ \hueco
&= \frac1{2\,\Delta t}\|\u^{n+1} - \u^n\|^2_{L^2(\Omega)}
+ \frac{E_{kin}(\u^{n+1})-E_{kin}(\u^n)}{\Delta t}\,.
\end{align*}
The discrete energy law \eqref{eq:energylaw5} is achieved by summing the above relations.
\end{proof}


\begin{remark}
It is possible to eliminate the term $\frac1{2\,\Delta t}\|\u^{n+1} -
\u^n\|^2_{L^2(\Omega)}$ from the energy law, considering the following linear
one-step scheme.
\beq\label{eq:scheme3b}
\bal
\frac{\phi^{n+1} - \phi^n}{\Delta t} &+ \u^{n+\frac12}\cdot\nabla\phi^n
\\
&- \nabla\cdot\left\{\phi^n(1-\phi^n)M\Big[\phi^n(1-\phi^n)\nabla\mu^{n+\frac12}-\nabla (A_1(\phi^n) \, q^{n+\frac12})\Big]\right\}=0\,,
\\ \hueco
\frac{q^{n+1} - q^n}{\Delta t} &+ \u^{n}\cdot\nabla q^{n+\frac12}
+ \frac1{\tau_b(\phi^n)} q^{n+\frac12} 
\\ 
&+ A_1(\phi^n)\nabla\cdot\left\{M\Big[\phi^n(1-\phi^n)\nabla\mu^{n+\frac12}
- \nabla (A_1(\phi^n) \, q^{n+\frac12})\Big]\right\} = 0\,,
\\ \hueco
\frac{\sig^{n+1} - \sig^n}{\Delta t} &+ (\u^{n+\frac12}\cdot\nabla)\sig^n
- (\nabla\u^{n+\frac12})\cdot\sig^n-\sig^n\cdot\left(\nabla\u^{n+\frac12}\right)^T
\\
&\qquad \qquad \qquad 
+ \frac1{\tau_s(\phi^{n+\frac12})}\sig^n
- B_2(\phi^{n+\frac12})2D(\u^{n+\frac12})=0\,,
\\ \hueco
\frac{\u^{n+1} - \u^n}{\Delta t} &+ (\u^n\cdot\nabla)\u^{n+\frac12}
- \nabla\cdot\left\{\eta(\phi^n)2D(\u^{n+\frac12})\right\}
+ \nabla p^{n+\frac12}
\\ 
&\qquad \qquad \qquad \qquad \qquad \qquad
- \mu^{n+\frac12}\nabla\phi^{n}
- \nabla\cdot\sig^n=0\,,
\\ \hueco
\nabla\cdot\u^{n+\frac12} &= 0\,,
\eal
\eeq
where
$$
\mu^{n+\frac12} = -C_0\Delta\phi^{n+\frac12} + f(\phi^{n+1},\phi^n)\,,
$$
and
$$
D(\u^{n+\frac12}) = \frac12 \Big[\nabla\u^{n+\frac12}+(\nabla\u^{n+\frac12})^T\Big]\,.
$$
\end{remark}

Analogous to scheme~\eqref{eq:simp_scheme2}, using the second order extrapolation $z^{n-\frac12}=\frac{3z^n-z^{n-1}}2, z\in\{\phi, \u, \sig\},$
for the explicit terms in scheme \eqref{eq:scheme3b} yields the following linear and
second order in time two-step numerical scheme
\beq\label{eq:scheme3c}
\bal
&\frac{\phi^{n+1} - \phi^n}{\Delta t} + \u^{n+\frac12}\cdot\nabla\phi^{n-\frac12}
\\
-& \nabla\cdot\left\{\phi^{n-\frac12}(1-\phi^{n-\frac12})M\Big[\phi^{n-\frac12}(1-\phi^{n-\frac12})\nabla\mu^{n+\frac12} - \nabla (A_1(\phi^{n-\frac12}) \, q^{n+\frac12})\Big]\right\} = 0\,,
\\ \hueco
&\frac{q^{n+1} - q^n}{\Delta t} + \u^{n-\frac12}\cdot\nabla q^{n+\frac12}
+ \frac1{\tau_b(\phi^{n-\frac12})} q^{n+\frac12}
\\
&\quad + A_1(\phi^{n-\frac12})\nabla\cdot\left\{M\Big[\phi^{n-\frac12}(1-\phi^{n-\frac12})\nabla\mu^{n+\frac12}
- \nabla (A_1(\phi^{n-\frac12}) \, q^{n+\frac12})\Big]\right\} = 0\,,
\\ \hueco
&\frac{\sig^{n+1} - \sig^n}{\Delta t} + (\u^{n+\frac12}\cdot\nabla)\sig^{n-\frac12}
- (\nabla\u^{n+\frac12})\cdot\sig^{n-\frac12}-\sig^{n-\frac12}\cdot\left(\nabla\u^{n+\frac12}\right)^T
\\
& \qquad \qquad \qquad \qquad \qquad \qquad + \frac1{\tau_s(\phi^{n+\frac12})}\sig^{n-\frac12}
- B_2(\phi^{n+\frac12})2D(\u^{n+\frac12}) = 0\,,
\\ \hueco
&\frac{\u^{n+1} - \u^n}{\Delta t} + (\u^{n-\frac12}\cdot\nabla)\u^{n+\frac12} 
\\
& \quad - \nabla\cdot \left\{\eta(\phi^{n-\frac12})2D(\u^{n+\frac12})\right\}
+ \nabla p^{n+\frac12}
- \mu^{n+\frac12}\nabla\phi^{n-\frac12}
- \nabla\cdot\sig^{n-\frac12} = 0\,,
\\ \hueco
& \quad\nabla\cdot\u^{n+\frac12} = 0\,.
\eal
\eeq
Scheme \eqref{eq:scheme3c} satisfies an analogous discrete energy law as scheme \eqref{eq:scheme3b}.

\begin{remark}\label{re:stability}
Note that for small shear rates $D(u)$ and the Weissenberg numbers $\tau^0_s$ that typically arise in our numerical experiments, 
the stiffness of the Oldroyd-B equation does not play a dominant role.
If it is required the high Weissenberg problem can be treated by using additional techniques like the logarithmic transformation of the conformation tensor or considering the stress diffusion term in the evolution equation for $\sig$,
for more details see, e.~g., \textit{Luk\'a\v{c}ov\'a-Medvi{\softd}ov\'a, Notsu, and She}~\cite{bangwei}.
For large shear rates an implicit approximation of the elastic shear stress is suitable, but it hurts the linearity  of a numerical scheme. The proposed modification of scheme \eqref{eq:scheme3b} reads
\beq\label{eq:scheme4_2}
\bal
\frac{\phi^{n+1} - \phi^n}{\Delta t} &+ \u^{n+1}\cdot\nabla\phi^n
\\ \hueco
& - \nabla\cdot\left\{\phi^n(1-\phi^n)M\Big[\phi^n(1-\phi^n)\nabla\mu^{n+\frac12}-\nabla (A_1(\phi^n) \, q^{n+\frac12})\Big]\right\}=0\,,
\\ \hueco
\frac{q^{n+1} - q^n}{\Delta t} &+ \u^{n}\cdot\nabla q^{n+\frac12} + \frac1{\tau_b(\phi^n)} q^{n+\frac12}
\\ \hueco&
+ A_1(\phi^n)\nabla\cdot\left\{M\Big[\phi^n(1-\phi^n)\nabla\mu^{n+\frac12}-\nabla (A_1(\phi^n) \, q^{n+\frac12})\Big]\right\}=0\,,
\\ \hueco
\frac{\sig^{n+1} - \sig^n}{\Delta t} &+ (\u^{n+\frac12}\cdot\nabla)\sig^{n+1}
- (\nabla\u^{n+\frac12})\cdot\sig^{n+1} - \sig^{n+1}\cdot\left(\nabla\u^{n+\frac12}\right)^T
\\ \hueco&
+ \frac1{\tau_s(\phi^n)}\sig^{n+1} + B_2(\phi^{n+\frac12})2D(\u^{n+\frac12})=0\,,
\\ \hueco
\frac{\u^{n+1} - \u^n}{\Delta t} &+ (\u^n\cdot\nabla)\u^{n+\frac12}
- \nabla\cdot\left\{\eta(\phi^n)\,2D(\u^{n+\frac12})\right\}
+ \nabla p^{n+\frac12}
\\ \hueco &
- \mu^{n+\frac12}\nabla\phi^{n}
-\nabla\cdot\sig^{n+1} =0\,,
\\ \hueco
\nabla\cdot\u^{n+\frac12} &= 0\,.
\eal
\eeq
Scheme \eqref{eq:scheme4_2} also satisfies an analogous discrete energy law as scheme \eqref{eq:scheme3b}.

Note that we can linearize scheme \eqref{eq:scheme4_2} by, e.~g., using a fixed point iteration, see Remark~\ref{re:fixpoint}.
Further, using the idea presented in scheme~\eqref{eq:scheme3c} concerning the extrapolation of  the explicit terms, while utilizing the Crank-Nicolson-type approximation $\sig^{n+\frac12}$ for the implicit terms,
we can obtain a second order  two-step numerical scheme.
\end{remark}

\subsection{Splitting scheme for \change{two-phase viscoelastic flows}{the full model}}\label{sec:splitting}

In this Subsection we present yet another possibility to discretize system \eqref{eq:model3b}. In order to save computational costs we
split the computation into three different substeps. The first two steps are the interesting ones allowing us to decouple the calculation of the fluid part $(\u,p)$
from the phase field and bulk stress parts $(\phi,q)$. The third step is the calculation of the shear stress part $\sig$. 
In the first step we discretize the simplified model.

\textbf{Step 1.} Find $(\phi^{n+1},q^{n+1})$ such that
\beq\label{eq:scheme1_step1}
\bal
\frac{\phi^{n+1} - \phi^n}{\Delta t} &+ \nabla\cdot (\u^*\phi^n)
\\ \hueco &
- \nabla\cdot\left\{\phi^n(1-\phi^n)M\Big[\phi^n(1-\phi^n)\nabla\mu^{n+\frac12}
- \nabla (A_1(\phi^n) \, q^{n+\frac12})\Big]\right\} = 0\,,
\\ \hueco
\frac{q^{n+1} - q^n}{\Delta t} &+ \nabla\cdot (\u^{n} q^{n+\frac12})
+ \frac1{\tau_b(\phi^n)} q^{n+\frac12}
\\ &  
+ A_1(\phi^n)\nabla\cdot\left\{M\Big[\phi^n(1-\phi^n)\nabla\mu^{n+\frac12}-\nabla (A_1(\phi^n) \, q^{n+\frac12})\Big]\right\} = 0\,,
\eal
\eeq
where
$$
\mu^{n+\frac12} = -C_0\Delta\phi^{n+\frac12} + f(\phi^{n+1},\phi^n)\,,
$$
and
\beq\label{def:ustar}
\u^* := \u^n - \Delta t \,\phi^n\nabla\mu^{n+\frac12}\,,
\eeq
to split the phase field part from the hydrodynamic part.
In the second step we discretize the fluid equations as follows.

\textbf{Step 2.} Find $(\u^{n+1},p^{n+1})$ such that
\beq\label{eq:scheme1_step2}
\bal
\frac{\u^{n+1} - \u^*}{\Delta t} &+ (\u^n\cdot\nabla)\u^{n+1}
- \nabla\cdot\left\{\eta(\phi^{n+\frac12})\,2D(\u^{n+1})\right\}
+ \nabla p^{n+1}
- \nabla\cdot\sig^n
= 0\,,
\\ \hueco
\nabla\cdot\u^{n+1} &= 0\,.
\eal
\eeq
Finally, in the third step we approximate the Oldroyd-B equation for
the time evolution of the shear stress tensor $\sig$.

\textbf{Step 3.} Find $\sig^{n+1}$ such that
\beq\bal\label{eq:scheme1_step3}
\frac{\sig^{n+1} - \sig^n}{\Delta t} + (\u^{n+1}\cdot\nabla)\sig^n
&- (\nabla\u^{n+1})\cdot\sig^n - \sig^n\cdot\left(\nabla\u^{n+1}\right)^T
\\ \hueco &
+ \frac1{\tau_s(\phi^{n+\frac12})}\sig^n
- B_2(\phi^{n+\frac12})\,2D(\u^{n+1}) = 0\,.
\eal\eeq

\begin{theorem}
The numerical scheme
\eqref{eq:scheme1_step1}-\eqref{eq:scheme1_step3} is linear and satisfies the discrete energy law
\begin{multline}\label{eq:energylaw_split}
\frac{E_{tot}(\phi^{n+1},q^{n+1},\sig^{n+1},\u^{n+1}) -
E_{tot}(\phi^{n},q^{n},\sig^n,\u^n)}{\Delta t}
= - ND^{n+1}_{phobic}
- ND^{n+1}_{split}
\\ \hueco
- \int_\Omega M\Big[\phi^n(1-\phi^n)\nabla\mu^{n+\frac12}-\nabla (A_1(\phi^n) \, q^{n+\frac12})\Big]^2
- \frac1{\tau^0_b}\left\|\frac{q^{n+\frac12}}{\phi^n}\right\|^2_{{L^2(\Omega)}}
\\ \hueco
- \int_\Omega\frac1{2\,\tau_s(\phi^{n+\frac12})} \tr(\sig^n)
- \int_\Omega\frac{\eta(\phi^{n+\frac12})}2\sum_{i,j=1}^d\left(\frac{\partial u^{n+1}_i}{\partial x_j}+\frac{\partial u^{n+1}_j}{\partial x_i}\right)^2\,,
\end{multline}
where
%
$$
ND^{n+1}_{split}:=\frac1{2\,\Delta t}\Big(\|\u^{n+1} -
\u^*\|^2_{L^2(\Omega)} + \|\u^*
- \u^n\|^2_{L^2(\Omega)} \Big)\,.
$$
\end{theorem}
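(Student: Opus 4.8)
The plan is to follow exactly the pattern of the proofs of Theorem~\ref{th:simp_scheme} and of the energy law~\eqref{eq:energylaw5}: test each of the three substeps with the natural multiplier, integrate over $\Omega$ under periodic (or no-flux) boundary conditions, and add the resulting identities. Throughout I would use that $\nabla\cdot\u^{n}=0$, which holds because $\u^n$ is produced by Step~2 at the previous time level (with the convention that the initial velocity is divergence-free). Linearity is immediate: \eqref{eq:scheme1_step1} is a linear system for $(\phi^{n+1},q^{n+1})$ up to the chosen linearization $f(\phi^{n+1},\phi^n)$; then $\mu^{n+\frac12}$ and hence $\u^*$ in \eqref{def:ustar} are explicit; \eqref{eq:scheme1_step2} is a linear Stokes-type problem for $(\u^{n+1},p^{n+1})$; and \eqref{eq:scheme1_step3} determines $\sig^{n+1}$ explicitly.

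For Step~1 I would test \eqref{eq:scheme1_step1}$_1$ by $\mu^{n+\frac12}$ and \eqref{eq:scheme1_step1}$_2$ by $q^{n+\frac12}$, integrate and sum, just as in the proof of Theorem~\ref{th:simp_scheme}. This yields $\frac1{\Delta t}\big(E_{mix}(\phi^{n+1})-E_{mix}(\phi^n)+E_{conf}(q^{n+1})-E_{conf}(q^n)\big)$, the term $ND^{n+1}_{phobic}$, the dissipations $\frac1{\tau^0_b}\|q^{n+\frac12}/\phi^n\|^2_{L^2(\Omega)}$ and $\int_\Omega M[\,\cdots\,]^2$ with $[\,\cdots\,]=\phi^n(1-\phi^n)\nabla\mu^{n+\frac12}-\nabla(A_1(\phi^n)\,q^{n+\frac12})$, the advection term of the $q$-equation vanishing because $\nabla\cdot\u^n=0$, and the leftover Cahn--Hilliard advection contribution $-\int_\Omega\u^*\phi^n\cdot\nabla\mu^{n+\frac12}$. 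Inserting $\u^*=\u^n-\Delta t\,\phi^n\nabla\mu^{n+\frac12}$ splits this last term into $-\int_\Omega\phi^n\u^n\cdot\nabla\mu^{n+\frac12}$ plus $\Delta t\int_\Omega(\phi^n)^2|\nabla\mu^{n+\frac12}|^2=\tfrac1{\Delta t}\|\u^*-\u^n\|^2_{L^2(\Omega)}$.

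For Step~2 I would test \eqref{eq:scheme1_step2}$_1$ by $\u^{n+1}$. The pressure term vanishes by $\nabla\cdot\u^{n+1}=0$, the transport term $(\u^n\cdot\nabla)\u^{n+1}\cdot\u^{n+1}$ vanishes by $\nabla\cdot\u^n=0$, the viscous term gives $\int_\Omega\tfrac{\eta(\phi^{n+\frac12})}2\sum_{i,j=1}^d(\partial_j u^{n+1}_i+\partial_i u^{n+1}_j)^2$, the stress term gives $\int_\Omega\sig^n:\nabla\u^{n+1}$, and the discrete time derivative gives $\tfrac1{\Delta t}\int_\Omega(|\u^{n+1}|^2-\u^*\cdot\u^{n+1})=\tfrac1{\Delta t}\big(E_{kin}(\u^{n+1})-\tfrac12\|\u^*\|^2\big)+\tfrac1{2\Delta t}\|\u^{n+1}-\u^*\|^2_{L^2(\Omega)}$. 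Expanding $\|\u^*\|^2$ about $\|\u^n\|^2$ and using again $\u^*-\u^n=-\Delta t\,\phi^n\nabla\mu^{n+\frac12}$ produces $\tfrac1{\Delta t}\big(E_{kin}(\u^{n+1})-E_{kin}(\u^n)\big)$, a term $-\tfrac1{2\Delta t}\|\u^*-\u^n\|^2_{L^2(\Omega)}$, and the cross term $+\int_\Omega\phi^n\u^n\cdot\nabla\mu^{n+\frac12}$. Adding the Step~1 and Step~2 identities, the two cross terms $\pm\int_\Omega\phi^n\u^n\cdot\nabla\mu^{n+\frac12}$ cancel, while the split contributions combine to $\big(\tfrac1{\Delta t}-\tfrac1{2\Delta t}\big)\|\u^*-\u^n\|^2_{L^2(\Omega)}+\tfrac1{2\Delta t}\|\u^{n+1}-\u^*\|^2_{L^2(\Omega)}=ND^{n+1}_{split}$. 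This is the heart of the argument: the auxiliary velocity $\u^*$ is engineered precisely so that the Cahn--Hilliard advection, the two $\|\cdot\|^2$ splitting remainders, and the kinetic-energy balance close consistently, and I expect getting all the factors of $\tfrac12$ and the signs to line up to be the only real technical point.

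Finally, for Step~3 I would test \eqref{eq:scheme1_step3} by $\tfrac12\one$, as in the proofs of Theorem~\ref{th:energylaw4} and of \eqref{eq:energylaw5}: the convective term and the $B_2(\phi^{n+\frac12})\,2D(\u^{n+1})$ term vanish by $\nabla\cdot\u^{n+1}=0$, the upper-convective terms give $-\int_\Omega\sig^n:\nabla\u^{n+1}$, the relaxation term gives $\int_\Omega\tfrac1{2\tau_s(\phi^{n+\frac12})}\tr(\sig^n)$, and the time derivative gives $\tfrac1{\Delta t}\big(E_{el}(\sig^{n+1})-E_{el}(\sig^n)\big)$. Adding this to the sum of Steps~1 and 2, the $\int_\Omega\sig^n:\nabla\u^{n+1}$ terms cancel, and after moving the dissipative terms to the right-hand side one obtains exactly~\eqref{eq:energylaw_split}.
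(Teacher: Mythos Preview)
Your proposal is correct and follows essentially the same strategy as the paper: test each substep with the natural multiplier, integrate, and add. The only organizational difference is in handling the coupling term: the paper multiplies the definition \eqref{def:ustar} of $\u^*$ by $\u^*$ itself to obtain $\int_\Omega \nabla\cdot(\u^*\phi^n)\,\mu^{n+\frac12} = \tfrac1{2\Delta t}\big(\|\u^*\|^2-\|\u^n\|^2+\|\u^*-\u^n\|^2\big)$ and then lets the $\pm\tfrac1{2\Delta t}\|\u^*\|^2$ cancel against the Step~2 kinetic contribution, whereas you substitute $\u^*=\u^n-\Delta t\,\phi^n\nabla\mu^{n+\frac12}$ directly into both the advection term and the expansion of $\|\u^*\|^2$. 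The two computations are algebraically equivalent and lead to the same $ND^{n+1}_{split}$.
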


\begin{proof}
  Similar to the proof of the discrete energy law \eqref{eq:energylaw5} we multiply
  \eqref{eq:scheme1_step1}$_1$ by $\mu^{n+\frac12}$,
  \eqref{eq:scheme1_step1}$_2$ by $q^{n+\frac12}$,
  \eqref{eq:scheme1_step2}$_1$ by $\u^{n+1}$, and
  \eqref{eq:scheme1_step3} by $\frac12\one$, and integrate over $\Omega$.  Assuming
  suitable boundary conditions the derivations of the discrete
  elastic energies are analogous, while the calculation of the discrete mixing energy leads to the
  additional term $\int_\Omega \nabla\cdot (\u^*\phi^n) \mu^{n+\frac12}$.
  The key idea of the splitting scheme lies in  matching this term with $\int_\Omega \frac1{\Delta t}(\u^{n+1} - \u^*)\cdot\u^{n+1}$.
  This is possible by multiplying expression \eqref{def:ustar} by $\u^*$ and integrating over $\Omega$, which yields
  \begin{eqnarray*}
  \|\u^*\|^2_{L^2(\Omega)}
  &=&
  \int_\Omega \u^n\cdot\u^*  - \int_\Omega \Delta t \,\phi^n\nabla\mu^{n+\frac12} \cdot \u^* 
  \\ &=& 
  \int_\Omega \u^n\cdot\u^*  + \int_\Omega \Delta t \,\nabla\cdot (\u^*\phi^n) \mu^{n+\frac12}
  \,.
  \end{eqnarray*}
  This can be rewritten as follows
  \begin{eqnarray*}
    \int_\Omega \nabla\cdot (\u^*\phi^n) \mu^{n+\frac12}
    &=&
    \frac1{\Delta t}\left(\|\u^*\|^2_{L^2(\Omega)} - \int_\Omega
    \u^n\cdot\u^* \right) 
    \\ &=& 
    \frac1{2\,\Delta t}\Big(\|\u^*\|_{L^2(\Omega)}^2 -
    \|\u^n\|_{L^2(\Omega)}^2
    + \|\u^* - \u^n\|_{L^2(\Omega)}^2\Big)
    \,.
  \end{eqnarray*}
Now, since
  $$
  \int_\Omega \frac 1{\Delta t}(\u^{n+1} - \u^*)\cdot\u^{n+1}
  =\frac1{2\,\Delta t}\Big(\|\u^{n+1}\|_{L^2(\Omega)}^2 -
  \|\u^*\|_{L^2(\Omega)}^2
  + \|\u^{n+1} - \u^*\|_{L^2(\Omega)}^2\Big)\,,
  $$
the additional terms $\pm \frac 1{2\Delta t} \|\u^*\|_{L^2(\Omega)}^2 $ are canceled out and we obtain the desired discrete energy law \eqref{eq:energylaw_split}.    
\end{proof}
Consequently,
  \begin{equation} \label{discrete_dissipation}
    E_{tot}(\phi^{n+1},q^{n+1},\sig^{n+1},\u^{n+1})
    \leq E_{tot}(\phi^{n},q^{n},\sig^n,\u^n),
  \end{equation}
  provided that we control $ND^{n+1}_{phobic}$, since all other terms are non-negative.
  Indeed, even using the OD2 approximation~\eqref{eq:OD2},
  our numerical experiments in Chapter~\ref{sec:simulations} suggest that the energy
  dissipation (\ref{discrete_dissipation}) holds, see Figure~\ref{fig:en1}.
\begin{remark}\label{re:chorin}
To further reduce the computational costs of our splitting scheme in \textbf{Step 2}
we propose to use Chorin's projection method, see \textit{Chorin}~\cite{chorin}.  This well-known algorithm allows to decouple computation of the velocity  and the pressure of system \eqref{eq:scheme1_step2}.

\textbf{Step I.} Find $\u^{\dagger}$ such that
$$
\frac{\u^{\dagger} - \u^n}{\Delta t} + (\u^{n}\cdot\nabla)\u^{\dagger} -
\nabla\cdot\left\{\eta(\phi^n)\left[\nabla\u^{\dagger}+\left(\nabla\u^{\dagger}\right)^T\right]\right\} +
\phi^n\nabla\mu^{n+\frac12}
-\nabla \cdot \sig^n=0\,,
$$
and thus
\beq\label{eq:pressure}
\frac{\u^{n+1} - \u^{\dagger}}{\Delta t} = \nabla p^{n+1}\,.
\eeq

\textbf{Step II.} Applying the divergence to \eqref{eq:pressure} yields
$$
\frac{\nabla\cdot\u^{n+1} - \nabla\cdot\u^{\dagger}}{\Delta t} = \Delta p^{n+1}\,.
$$
Consequently, due to the incompressibility condition $\nabla\cdot \u^{n+1}=0$ we find $p^{n+1}$ such that
$$
\Delta p^{n+1} = - \frac{\nabla\cdot\u^{\dagger}}{\Delta t}\,.
$$

\textbf{Step III.} Since $\u^\dagger$ and $p^{n+1}$ are now known, we find $\u^{n+1}$ by solving \eqref{eq:pressure}.\\
In summary, instead of solving a coupled system for $(\u^{n+1},p^{n+1})$, we compute $(\u^{\dagger},p^{n+1},\u^{n+1})$ in a decoupled way.
\end{remark}

\begin{remark}\label{re:fixpoint}
For large shear rates $D(u)$ an implicit approximation of the shear stress would be suitable,
but it would hurt the linearity of the numerical scheme,  see also Remark~\ref{re:stability}.
The proposed modification of \eqref{eq:scheme1_step2} and \eqref{eq:scheme1_step3} reads

\textbf{Step 2$^*$.} Find $(\sig^{n+1},\u^{n+1},p^{n+1})$ such that
\beq\label{eq:scheme5_2_step2tilde}
\bal
\frac{\sig^{n+1} - \sig^n}{\Delta t} + (\u^{n+1}\cdot\nabla)\sig^{n+1}
&- (\nabla\u^{n+1})\cdot\sig^{n+1} - \sig^{n+1}\cdot\left(\nabla\u^{n+1}\right)^T
\\
&+ \frac1{\tau_s(\phi^{n+\frac12})}\sig^{n+1} +B_2(\phi^{n+\frac12})2D(\u^{n+1})
=0\,,
\\ \hueco
\frac{\u^{n+1} - \u^n}{\Delta t} + (\u^n\cdot\nabla)\u^{n+1}
&- \nabla\cdot\left\{\eta(\phi^{n+\frac12})\,2D(\u^{n+1})\right\}
+ \nabla p^{n+1}
\\ &
+ \phi^n\nabla\mu^{n+\frac12}
-\nabla\cdot\sig^{n+1} =0\,,
\\ \hueco
\nabla\cdot\u^{n+1} = 0\,. \qquad \qquad &
\eal
\eeq

It is possible to linearize and split \textbf{Step 2$^*$} again by, e.~g., using the following fixpoint iteration.
Given $(\sig^{n,0}=\sig^n,\u^{n,0}=\u^n)$ from the previous time step, we repeat \textbf{Step 2$^\dagger$} and
\textbf{3$^\dagger$} for $l=0,1,...$, until $||z^{n,l+1}-z^{n,l}||\le\delta||z^{n,l}||$, 
for $z \in \{\sig,\u,p\}$ and $\delta$ sufficiently small.

\textbf{Step 2$^\dagger$.} Find $(\u^{n,l+1},p^{n,l+1})$ such that
\beq\label{eq:scheme5_2_step2hat}
\bal
\frac{\u^{n,l+1} - \u^n}{\Delta t} + (\u^n\cdot\nabla)\u^{n,l+1}
&- \nabla\cdot\left\{\eta(\phi^{n+\frac12})\,2D(\u^{n,l+1})\right\}
+ \nabla p^{n,l+1}
\\ &
+ \phi^n\nabla\mu^{n+\frac12}
- \nabla\cdot(\sig^{n,l})=0\,,
\\ \hueco
\nabla\cdot\u^{n,l+1} = 0\,, \qquad \qquad & 
\eal
\eeq
where
$$
D(\u^{n,l+1}) = \frac12 \Big[ \nabla\u^{n,l+1}+(\nabla\u^{n,l+1})^T \Big]\,.
$$

\textbf{Step 3$^\dagger$.} Find $\sig^{n,l+1}$ such that
\begin{multline}\label{eq:scheme5_2_step3}
\frac{\sig^{n,l+1} - \sig^n}{\Delta t} + (\u^{n,l+1}\cdot\nabla)\sig^{n,l+1} -
(\nabla\u^{n,l+1})\cdot\sig^{n,l+1}-\sig^{n,l+1}\cdot\left(\nabla\u^{n,l+1}\right)^T
\\ \hueco 
+ \frac1{\tau_s(\phi^{n+\frac12})}\sig^{n,l+1} + B_2(\phi^{n+\frac12}) 2 D(\u^{n,l+1})
=0\,.
\end{multline}

\textbf{Step 4$^\dagger$.} Update solution: $\u^{n+1}=\u^{n,l+1},p^{n+1}=p^{n,l+1},\sig^{n+1}=\sig^{n,l+1}$.
Note that we can also use Chorin's projection method from Remark~\ref{re:chorin} in \textbf{Step 2$^\dagger$}.

\end{remark}
Let us point out that we have proven energy dissipation for
semi-discrete schemes. \change{The proofs of the fully discrete schemes are
analogous and our second order in space finite volume / finite
difference scheme preserves this property as well, see
\textit{Luk\'a\v{c}ov\'a-Medvi{\softd}ov\'a et al.}~\cite{bangwei}.}{The spatial discretization is done by the second order finite volume/ finite difference scheme.
The degrees of freedom for velocities are the centers of cell faces. For example, in two space dimensions the $x$-velocity component is given at the centers of vertical cell faces, 
while the $y$-velocity component is given at the centers of horizontal cell faces. Consequently, the velocity components are piecewise linear in one direction and constant in the other. 
Other variables $(\phi,q,\sig,p)$ are piecewise constant and given at the cell centers. This is analogous to the Marker and Cell (MAC) method by \textit{Harlow and Welch}~\cite{mac}.
We use an upwind finite volume method to approximate the advection terms and central finite differences for other derivatives. This discretization is mass conserving for $\phi$. 
The proof of the energy dissipation property for the fully discrete schemes can be done in an analogous way to that for the semi-discrete schemes, 
see \textit{Luk\'a\v{c}ov\'a-Medvi{\softd}ov\'a et al.}~\cite{bangwei}.}

\section{Numerical experiments}\label{sec:simulations}

In this Section we illustrate the behavior of the newly derived
numerical schemes in 2D. For the full model
\eqref{eq:model}/\eqref{eq:model3b} we apply the splitting scheme
\eqref{eq:scheme1_step1}-\eqref{eq:scheme1_step3}. Here we use the
Chorin projection method, see Remark~\ref{re:chorin}, and the optimal
dissipation 2 approximation \eqref{eq:OD2} for $f(\phi^{n+1},\phi^n)$, see
Remark~\ref{re:potential}, 
since we are utilizing the Flory-Huggins potential $F_{log}$ \eqref{eq:fh} in our model equations.

The simplified model \eqref{eq:simp_model} is simulated using the
second order scheme \eqref{eq:simp_scheme2} with the optimal
dissipation 2 approximation.  Note that for
our numerical schemes we can use larger $\Delta t$ than that applied
in \textit{Zhou et al.}~\cite{zhou}.  For example for our second order
scheme for the simplified model we can set $\Delta t = 0.25$ instead
of $\Delta t = 0.025$ as in \textit{Zhou et al.}~\cite{zhou}. This is
related to the fact that our energy dissipative schemes are more
stable.

We start with the numerical analysis of our numerical scheme
\eqref{eq:simp_scheme2} for the simplified model by calculating its
experimental order of convergence (EOC) in time.  Therefore the finest
resolution ($\Delta t=2^{-4}\cdot 10^{-3}$) is used as the reference
solution $z_{ref}$, $z\in\{\phi,q\}$, and
$$
EOC(z)=\log_2(e(z)/e(z'))\,,
$$
where $e(z)=\|z-z_{ref}\|_{L_1(\Omega)}$ and $e(z')$ are the
$L_1$-errors of two numerical solutions computed with the consecutive
time steps $\Delta t$ and $\Delta t' = \Delta t / 2$.
Table~\ref{ta:eoc} clearly indicates that our claim that scheme
\eqref{eq:simp_scheme2} is of second order in time is true.

\begin{table}
  \centering
  \caption{Experimental order of convergence (EOC) in time of
    scheme \eqref{eq:simp_scheme2} using the smooth initial data
    $\phi_0 = 0.5+0.5\sin(2\pi x)sin(2\pi y)$ and apart from that
    the parameters of the first numerical experiment.}
\begin{tabular}{|c | c|c |c |c|}
                \hline
                $\Delta t$/$\Delta t'$   & $L_1$-error $\phi$ & $EOC(\phi)$ & $L_1$-error $q$ & $EOC(q)$ \\
                \hline
                $8/4\cdot 10^{-3}$           & $3.765$              &        & 1.812              &       \\
                $4/2\cdot 10^{-3}$           & $1.069$              &  1.817 & 0.4613             & 1.974 \\
                $2/1\cdot 10^{-3}$           & $0.2824$             &  1.920 & 0.1172             & 1.977 \\
                $1/\frac12\cdot 10^{-3}$      & $7.124\cdot 10^{-2}$ &  $1.987$ & $2.907\cdot 10^{-2}$ & 2.012 \\
                $\frac12/\frac14\cdot 10^{-3}$ & $1.779\cdot 10^{-2}$ &  $2.002$ & $7.212\cdot 10^{-3}$ & 2.011 \\
                $\frac14/\frac18\cdot 10^{-3}$ & $4.803\cdot 10^{-3}$ &  $1.889$ & $1.854\cdot 10^{-3}$ & 1.960 \\
                $\frac18/\frac1{16}\cdot 10^{-3}$ & $1.462\cdot 10^{-3}$ &  $1.716$ & $4.401\cdot 10^{-4}$ & 2.075 \\
                \hline
\end{tabular}
\label{ta:eoc}
\end{table}

\begin{figure}[t]
  \centering
  \includegraphics[width=1\textwidth,trim=3.5cm 2cm 1.5cm 1cm]{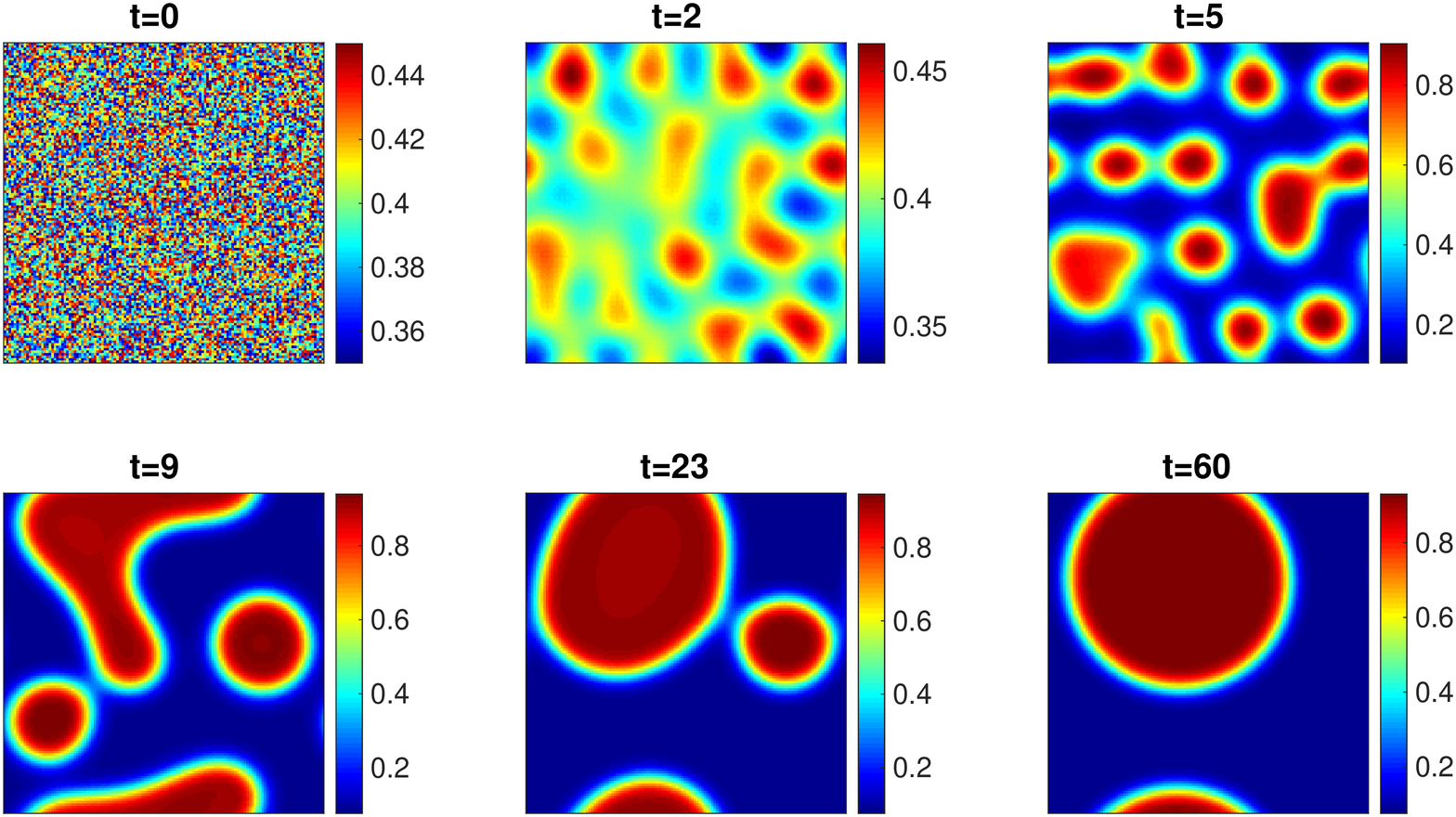}
  \caption{Time evolution of the polymer-solvent phase separation
    after a temperature quench with $\chi = 3, \phi_0 = 0.4$ and
    $\omega =[-0.05,0.05]$. The computational domain is
    $\Omega=[0,1]\times[0,1]$ and the interface width $\sqrt{C_0} =
    \frac1{\sqrt{600}}$. The time step is $\Delta t = 10^{-4}$.}
  \label{fig:ex1}
\end{figure}

\begin{figure}[t]
  \centering
  \includegraphics[width=0.9\textwidth,trim=2cm 2cm 8cm 2.8cm]{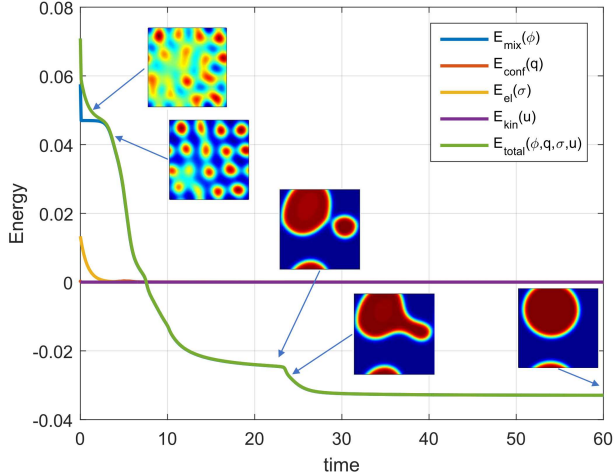}
  \caption{Energy evolution of the first numerical experiment
    corresponding to Figure~\ref{fig:ex1} with intermediate states of
    the phase separation.}
  \label{fig:en1}
\end{figure}

\begin{figure}[t]
  \centering
  \includegraphics[width=1\textwidth,trim=3.5cm 4cm 1.5cm 2cm]{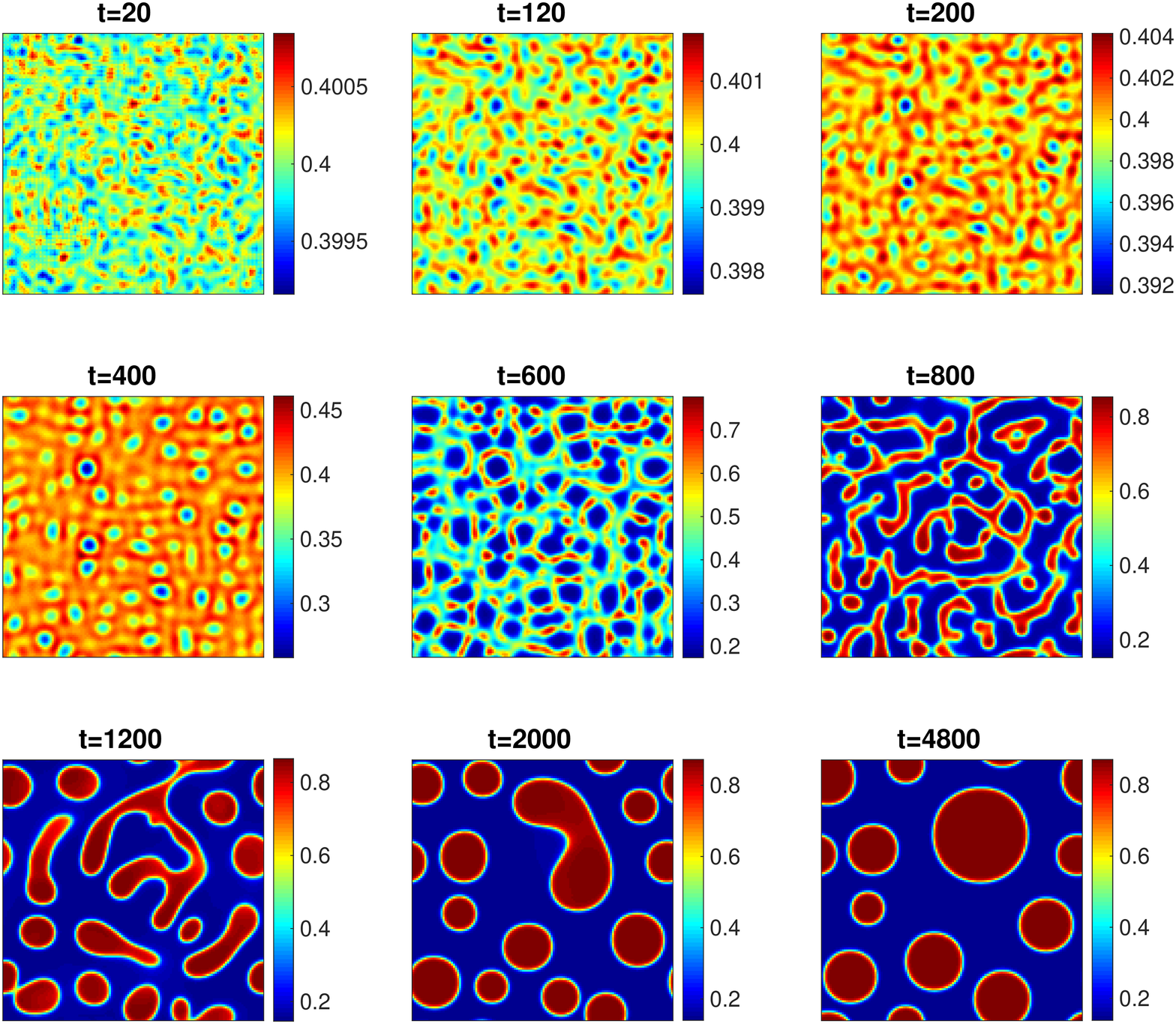}
  \caption{Time evolution of the polymer-solvent phase separation
    after a temperature quench with $\chi=2.\overline{54}, \phi_0 = 0.4$ and
    $\omega =[-0.001,0.001]$. The computational domain is
    $\Omega=[0,128]\times[0,128]$ and the interface width $\sqrt{C_0}
    = 1$. The time step is $\Delta t = 0.025$.}
  \label{fig:zze}
\end{figure}

\begin{figure}[t]
  \centering
  \includegraphics[width=1\textwidth,trim=1cm 0cm 1cm 0cm]{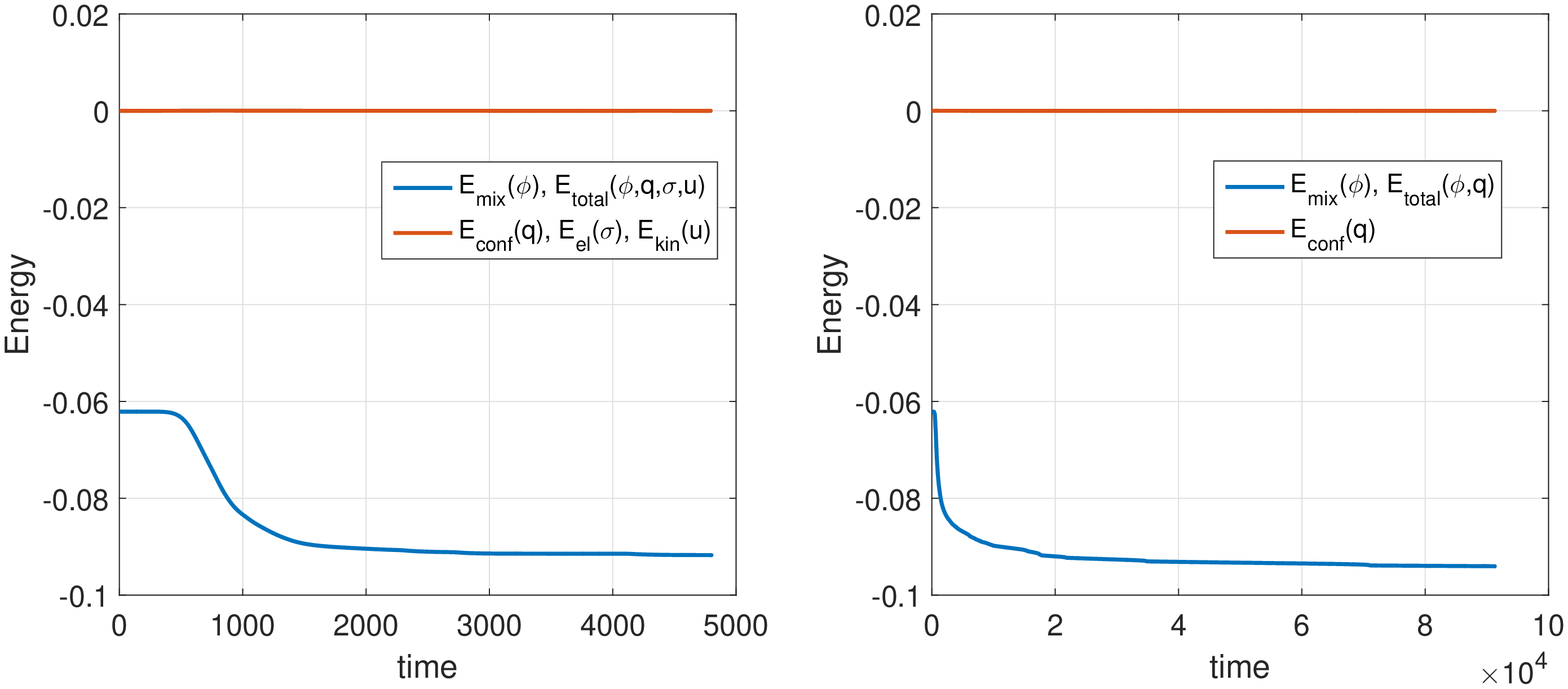}
  \caption{Energy evolution of the second numerical experiment
    corresponding to Figure~\ref{fig:zze} (left) and
    Figure~\ref{fig:simplified} (right).}
  \label{fig:en2}
\end{figure}

\begin{figure}[t]
  \centering
  \includegraphics[width=1\textwidth,trim=3.5cm 2.5cm 1.5cm 1cm]{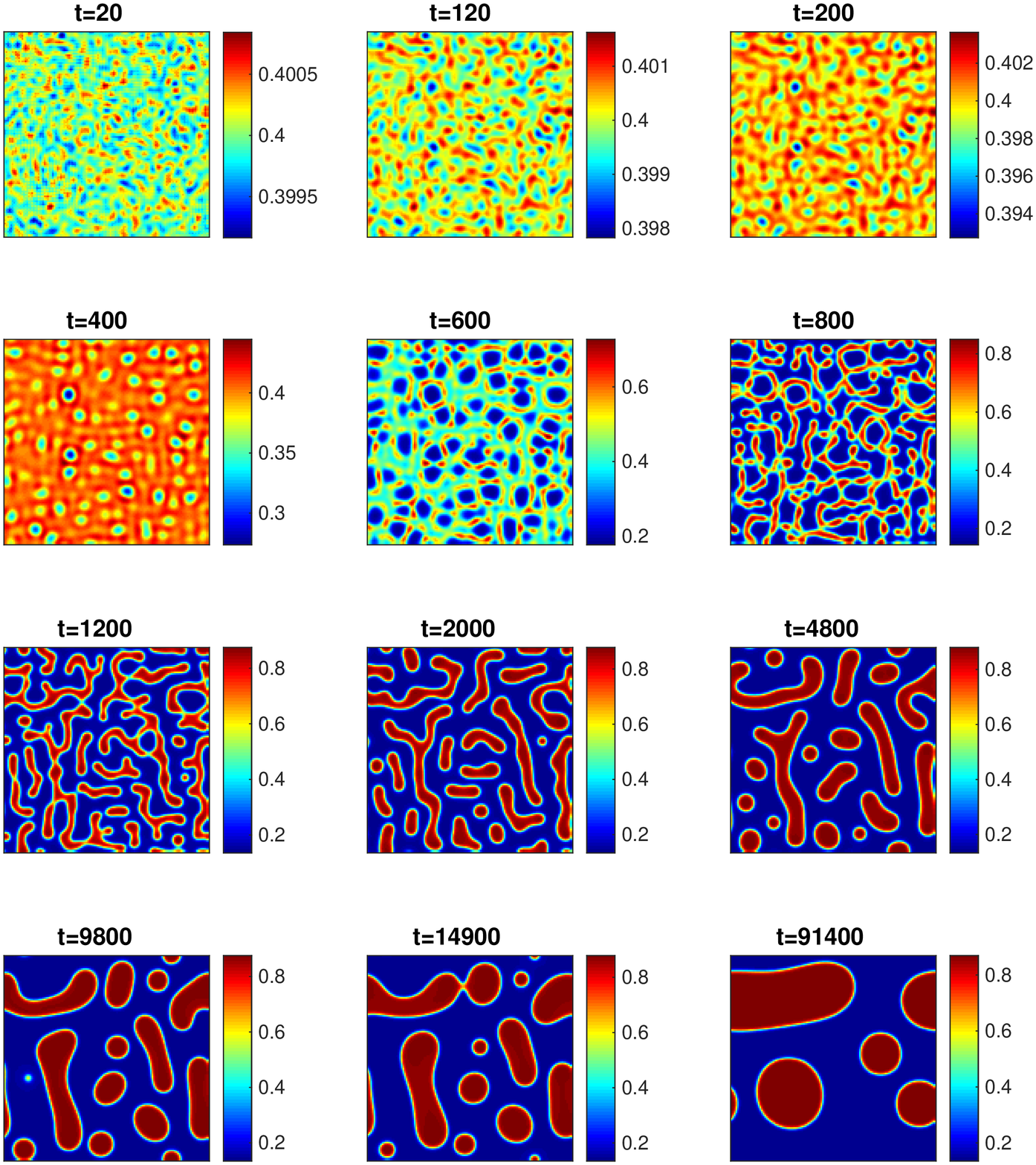}
  \caption{Time evolution of the polymer-solvent phase separation
    after a temperature quench, simulated by the simplified model
    utilizing the same parameter set used in the experiment from
    Figure~\ref{fig:zze}, except for the time step $\Delta t = 0.25$,
    since our second order scheme is used.}
  \label{fig:simplified}
\end{figure}

In the {\bf first numerical experiment} we solve numerically system
\eqref{eq:model} applying periodic boundary conditions.  The
computational domain $\Omega=[0,1]\times[0,1]$ is divided into
$128\times 128$ grid cells.  We follow the parameter set from
\textit{Gomez and Hughes}~\cite{gomez}.  The initial data of the
volume fraction $\phi(t=0)$ is taken to be a constant $\phi_0=0.4$
with a random perturbation distributed in $\omega = [-0.05,0.05]$ and
the initial velocities and bulk stress are set to zero.  The initial
value of the shear stress tensor is set to $\sig(t=0)= B_2(\phi(t=0))
(\sqrt{2} - 1 ) \one$, which implies the positivity definiteness of
the shear stress tensor. Further, we set the interface width $\sqrt{C_0} = \frac1{\sqrt{600}}$.
For the Flory-Huggins potential the degrees of polymerization are set to $n_p
= n_s = 1$ and the temperature-dependent Flory interaction $\chi=3$.  The bulk modulus is set to
$A_1(\phi)=M^0_b\left[1+\tanh\left(
    \frac{\cot(\pi\phi^*)-\cot(\pi\phi)}{\epsilon}\right)\right]+M^1_b$,
where $M^0_b=0.5$ and $M^1_b=1$, $\phi^*$ is set to be equal to the
initial average polymer volume fraction $\phi_0$ and $\epsilon=0.01$.
Furthermore, we set the mobility coefficient $M=10$ and the relaxation
coefficients to $\tau_b^0=10, \tau^0_s=5$ and $m^0_s=0.2$.

This experiment demonstrates phase separation by aggregation of the
polymer molecules towards droplets. Figure~\ref{fig:ex1} illustrates
time evolution of the volume fraction $\phi$.  The total energy is
(strictly) monotonically decreasing over time, which is related to the
surface minimization of the droplets and to droplets merging, see
Figure~\ref{fig:en1}.

The {\bf second experiment} has been proposed in \cite{zhou}. Here we
solve numerically both, the complete system \eqref{eq:model} as well
as the simplified model \eqref{eq:simp_model}. The computational
domain $\Omega=[0,128]\times[0,128]$ is divided to $128\times 128$
grid cells, initial volume fraction consists of $\phi_0 = 0.4$ and a
small random perturbation distributed in $\omega = [-0.001,0.001]$.
The interface thickness width $\sqrt{C_0} = 1$, which is already very
small having the size of one grid cell, and the Flory interaction $\chi=2.\overline{54}$.
The initial value of the shear stress is set to zero as in
\cite{zhou}. All other parameters are used as in the first experiment.

Figure~\ref{fig:zze} shows simulation of the complete system
\eqref{eq:model}, where the whole viscoelastic phase separation
process is exhibited.  In the earlier stage the polymer-rich phase
forms thin networklike structures.  The solvent-rich droplets grow and
coagulate. The area of the polymer-rich phase keeps decreasing. This
is the well-known volume-shrinking process in polymer phase
separation.  In the later stage polymer-rich networklike structures
are broken and the polymer-rich phase changes from being continuous to
being discontinuous. This process is called phase inversion,
cf.~\cite{zhou}.

Figure \ref{fig:simplified} illustrate the dynamics of the simplified
model \eqref{eq:simp_model}. We can clearly see that also this model
captures most important physical mechanism of the viscoelastic phase
separation. From time $t=600$ thin networklike structures formed by
the matrix-polymer-rich phase can be clearly recognized.

The presented experiments confirm the reliability of our newly
developed methods that preserve thermodynamic consistency of the
underlying physical model and dissipate free energy on the discrete
level, see Figures~\ref{fig:en1} and \ref{fig:en2}. Consequently, they
can be applied to model numerically complex polymeric mixtures and
provide a detailed view in the dynamics of a phase separation process
of a semi-dilute polymer-solvent mixture after a temperature quench,
including both key characteristics volume-shrinking and phase
inversion.

\section*{Conclusions}

In this paper we have derived and analysed new linear, energy
dissipative numerical schemes for viscoelastic phase separation. The
mathematical model is obtained through the variational principle as a
minimizer of the free energy. Consequently, the model consisting of
the Cahn-Hilliard equation describing the dynamics of interface
between polymer and Newtonian solvent and the Oldroyd-B for the
viscoelastic flow, can be understood as the gradient flow
corresponding to the total free energy.

The linearity of the numerical schemes increases the efficiency of
numerical simulations since there is no nonlinear iterative process
required.  The energy dissipative property is fundamental for
phase-separation problems and reflects their thermodynamic consistency
on the discrete level.  This property has been demonstrated
experimentally and proven theoretically up to the numerical
dissipation of the potential $ND_{phobic}$, which is small.

For the simplified model \eqref{eq:simp_model} the proposed numerical
scheme is second order. Numerical experiments confirm that the
simplified model can describe the most important physical properties
of the viscoelastic phase separation. The full system can be
approximated by the fully coupled scheme, Subsection~\ref{sec:coupled}
and the splitting scheme, Subsection~\ref{sec:splitting}. Both schemes 
yield analogous numerical solutions, but we opted here for the
splitting scheme, since it is more efficient computationally.

In future our aim is to develop hybrid schemes for multiscale models
of viscoelastic phase separation processes. Thus, our aim will be to
combine the proposed linear, energy dissipative schemes for
macroscopic models coupled with the combined Lattice-Boltzmann and
Molecular-Dynamics simulations of mesoscopic models for the
viscoelastic phase separation. We refer a reader to \cite{td} and the
references therein for more details on the latter scheme. We believe
that by such hybrid multiscale simulation the underlying physics will
become more clear and can provide deeper insight and perhaps also the
development of more refined and accurate macroscopic models.

\section*{Acknowledgements}

The present research has been supported by the German Science
Foundation (DFG) under the TRR-SFB 146 Multiscale Simulation Methods
for Soft Matter Systems. G. Tierra has been supported by
MTM2015-69875-P (Ministerio de Econom\'ia y Competitividad,
Spain). The authors gratefully acknowledge this support.


\bibliography{bibliography}

\end{document}